\documentclass[11pt]{nyjm} 

\usepackage{amssymb}
\usepackage{hyperref}
\usepackage{cleveref}

\title[Bi-invariant orders on a nilpotent group]{The space of bi-invariant orders on a nilpotent group}

\author{Dave Witte Morris}

\address
{Department of Mathematics and Computer Science,
University of Lethbridge, Lethbridge, Alberta, T1K~3M4, Canada}
\email{Dave.Morris@uleth.ca, 
\href{http://people.uleth.ca/~dave.morris/}{http://people.uleth.ca/\!$\sim$dave.morris/}}

\keywords{invariant order, left-orderable group, nilpotent group}

\subjclass{Primary 20F60; Secondary 06F15, 20F18}

\DeclareMathOperator{\LO}{LO}
\DeclareMathOperator{\VLO}{VLO}
\DeclareMathOperator{\BO}{BiO}
\DeclareMathOperator{\Aut}{Aut}
\DeclareMathOperator{\Out}{Out}
\DeclareMathOperator{\Comm}{Comm}
\DeclareMathOperator{\dom}{domain}

\newcommand{\normal}{\triangleleft}
\newcommand{\lamron}{\triangleright}
\newcommand{\precstar}{\stackrel{\vbox{\hbox{$\scriptstyle*$}\vskip-3pt}}{\prec}}
\newcommand{\succstar}{\stackrel{\vbox{\hbox{$\scriptstyle*$}\vskip-3pt}}{\succ}}

\newcommand{\pref}[1]{(\ref{#1})}
\newcommand{\fullcref}[2]{\cref{#1}\pref{#1-#2}}
\newcommand{\csee}[1]{(see \cref{#1})}

\DeclareMathOperator{\rank}{rank}

\newcommand{\integer}{\mathbb{Z}}
\newcommand{\rational}{\mathbb{Q}}
\newcommand{\real}{\mathbb{R}}
\newcommand{\GG}{\mathbb{G}}
\newcommand{\HH}{\mathbb{H}}
\newcommand{\CC}{\mathbb{C}}

\newcommand{\isol}[1]{\langle \! \langle #1 \rangle \! \rangle}

\newcommand\bigset[2]{\left\{\, #1 
 \mathrel{\left| \vphantom {\left\{ #1 \mid #2 \right\} }
 \right.} #2 \,\right\} }

\numberwithin{equation}{section}

\crefformat{section}{Section~#2#1#3}

\newtheorem{thm}[equation]{Theorem}
\newtheorem{prop}[equation]{Proposition}
\newtheorem{cor}[equation]{Corollary}
\newtheorem{lem}[equation]{Lemma}

\crefmultiformat{prop}{Propositions~#2#1#3}{ and~#2#1#3}{, #2#1#3}{, and~#2#1#3}

\theoremstyle{definition}
\newtheorem{rem}[equation]{Remark}
\newtheorem{rems}[equation]{Remarks}
\newtheorem{defn}[equation]{Definition}
\newtheorem{eg}[equation]{Example}
\newtheorem{notation}[equation]{Notation}
\newtheorem{obs}[equation]{Observation}
\newtheorem*{ack}{Acknowledgments}

\crefformat{rems}{Remark~#2#1#3}

 \newcounter{case}
 \newenvironment{case}[1][\unskip]{\refstepcounter{case}\bf
 \par\medbreak \noindent \hskip\parindent Case \thecase\ #1. \it}{\unskip\upshape}
\crefformat{case}{Case~#2#1#3}

\renewcommand{\Cref}[1]{\cref{#1}}

\makeatletter
\newcommand{\noprelistbreak}{\smallskip\@nobreaktrue\nopagebreak} 
\makeatother

\begin{document}

\vbox to 0pt{\vskip-1in
\centerline{[published in \emph{New York Journal of Mathematics} 18 (2012) 261--273]}
\centerline{\url{http://nyjm.albany.edu/j/2012/18-13.html}}
\vss}

\begin{abstract}
We prove a few basic facts about the space of bi-invariant (or left-invariant) total order relations on a torsion-free, nonabelian, nilpotent group~$G$. For instance, we show that the space of bi-invariant orders has no isolated points (so it is a Cantor set if $G$ is countable), and give examples to show that the outer automorphism group of~$G$ does not always act faithfully on this space. Also, it is not difficult to see that the abstract commensurator group of~$G$ has a natural action on the space of left-invariant orders, and we show that this action is faithful. These results are related to recent work of T.\,Koberda that shows the automorphism group of~$G$ acts faithfully on this space.
\end{abstract}

\maketitle

\setcounter{tocdepth}{1} 
\tableofcontents

\section{Introduction} \label{IntroSect}

\begin{defn} Let $G$ be an abstract group.
	\begin{itemize} \itemsep=\smallskipamount
	\item A total order~$\prec$ on the elements of~$G$ is:
		\begin{itemize}
		\item \emph{left-invariant} if $x \prec y \Rightarrow gx \prec gy$, for all $x,y,g \in G$,
		and
		\item \emph{bi-invariant} if it is both left-invariant and \emph{right-invariant} (which means  $x \prec y \Rightarrow xg \prec yg$, for all $x,y,g \in G$).
		\end{itemize}
	\item The set of all left-invariant orders on~$G$ is denoted $\LO(G)$. 
	It has a natural topology that makes it into a compact, Hausdorff space: for any $x,y \in G$, we have the basic open set $\{\, {\prec} \in \LO(G) \mid x \prec y \,\}$ (see~\cite{Sikora-TopOrders}).
	\item The set of all bi-invariant orders on~$G$ is denoted $\BO(G)$. It is a closed subset of $\LO(G)$.
	\item Any group isomorphism $G_1 \stackrel{\cong}{\to} G_2$ induces a bijection $\LO(G_1) \to \LO(G_2)$. Therefore, the automorphism group $\Aut(G)$ acts on $\LO(G)$. (Furthermore, the subset $\BO(G)$ is invariant.)
	\end{itemize}
\end{defn}

It is known \cite[Thm.~B, p.~1688]{Navas-DynamicsLO} that if $G$ is a locally nilpotent group, and $G$ is not an abelian group of rank\/~$\le 1$, then the space of left-invariant orders on~$G$ has no isolated points. We prove the same for the space of bi-invariant orders:

\begin{prop} \label{BONoIsolated}
If $G$ is a locally nilpotent group, and $G$ is not an abelian group of rank\/~$\le 1$, then the space of bi-invariant orders on~$G$ has no isolated points.
\end{prop}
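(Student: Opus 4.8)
The plan is to induct, perturbing a given bi-order by altering it only on a convex subgroup or convex quotient where the alteration cannot destroy bi-invariance. First, some reductions. If $\BO(G)=\varnothing$ there is nothing to prove, so assume not; then $G$ is torsion-free, and one reduces in the usual way to the case that $G$ is finitely generated (hence, being locally nilpotent, torsion-free nilpotent), using that an isolated point is cut out by finitely many inequalities $e\prec g_i$. Second, convex subgroups of a bi-ordered group are normal and form a chain; if there is \emph{no} smallest nontrivial convex subgroup, then for any finite set $F$ of $\prec$-positive elements the intersection of all nontrivial convex subgroups is trivial, so there is a nontrivial convex subgroup $D$ with $D\cap F=\varnothing$, and the lexicographic order built from $\prec|_{G/D}$ together with the \emph{reverse} of $\prec|_D$ is a bi-invariant order $\neq\prec$ agreeing with $\prec$ on $F$. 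So we may assume $\prec$ has a smallest nontrivial convex subgroup $C$. Finally, if $G$ is abelian then $\BO(G)=\LO(G)$, which has no isolated points when $\rank G\ge 2$ by the quoted theorem of Navas; so assume $G$ nonabelian, and induct on the Hirsch length $h(G)$.

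The key structural point is that $C\normal G$ lies in $Z(G)$. Being smallest, $C$ is archimedean, hence by Hölder's theorem order-isomorphic to a subgroup of $(\real,+)$; being a subgroup of the Noetherian group $G$ it is finitely generated, so $C\cong\integer^k$ and the order on it is given by an injective homomorphism $\phi\colon\integer^k\to\real$ whose coordinates are $\rational$-linearly independent. Since $C\normal G$ and $G$ is nilpotent of class $c$, the identity $[C,\,\underbrace{G,\dots,G}_{c}\,]=\{e\}$ implies that each $g\in G$ acts on $C$ by a unipotent matrix. Conjugation by $g$ preserves $\prec$, hence the positive cone of $\prec|_C$, so $\phi$ and $\phi\circ(\text{action of }g)$ are order-embeddings of $C$ with the same positive cone, and therefore positive scalar multiples of one another; unipotence forces the scalar to be $1$. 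Thus $\phi$ annihilates the rational subspace $[C,g]\subseteq C$; as $\phi$ annihilates no nonzero rational vector, $[C,g]=\{e\}$, i.e. $g$ centralizes $C$.

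Now we perturb. Since $C$ is convex and normal, $\prec$ is the lexicographic extension of the induced bi-orders on $G/C$ and on $C$; and because $C$ is central, the lexicographic extension of \emph{any} bi-order on $G/C$ by \emph{any} bi-order on $C$ is again bi-invariant (a bi-order on $G/C$ is automatically invariant under the conjugation action of $G$, which factors through $G/C$; and every order on a central subgroup is $G$-invariant). If $\rank C\ge 2$, then $\BO(C)=\LO(C)$ has no isolated points, so $\prec|_C$ is a limit of bi-orders $\neq\prec|_C$; re-assembling each lexicographically with $\prec|_{G/C}$ gives bi-orders on $G$ tending to $\prec$ but $\neq\prec$ — they agree with $\prec$ on a prescribed finite $F$ once the perturbed order on $C$ agrees with $\prec|_C$ on the finitely many $xy^{-1}$ ($x,y\in F$) lying in $C$. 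If instead $\rank C=1$, then $[G,G]\subseteq C$ and $G/C$ is torsion-free nilpotent of Hirsch length $h(G)-1$: if $G/C$ is abelian it has rank $\ge 2$ (a torsion-free nilpotent group with abelianization of rank $\le 1$ is abelian, and $G$ is not), and we perturb the quotient order on $G/C$ directly; if $G/C$ is nonabelian, then by the induction hypothesis $\prec|_{G/C}$ is not isolated in $\BO(G/C)$, and lexicographic re-assembly again produces the required nearby bi-orders on $G$.

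I expect the main obstacle to be the structural fact $C\subseteq Z(G)$: one must understand the convex-subgroup filtration of a bi-order on a nilpotent group well enough to locate a spot where the order can be varied compatibly with bi-invariance. It is essential that $C$ be the \emph{bottom} of the filtration — for a general jump $C_i/C_{i-1}$ the conjugation action can be a nontrivial unipotent group, in which case the invariant orders on that jump form a finite set and no perturbation is possible; only at the archimedean bottom does unipotence collapse to triviality. The remainder is bookkeeping: the reduction to the finitely generated case, the standard facts that convex subgroups of bi-orders are normal and that nilpotent groups are Noetherian, and the verification that the lexicographic re-assemblies converge to $\prec$.
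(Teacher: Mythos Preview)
Your argument is essentially correct and genuinely different from the paper's. The paper argues by contradiction: an isolated bi-order would have finitely many convex subgroups with all jumps of rank~$1$, and by comparing this chain to the upper central series one locates an index~$\ell$ with $[G,C_\ell]\subseteq C_{\ell-2}$, so $C_\ell/C_{\ell-2}$ is a central rank-$2$ abelian quotient on which the order may be perturbed. Your route avoids this comparison entirely: you induct on Hirsch length, prove directly that the \emph{minimal} nontrivial convex subgroup~$C$ is central, and then perturb either on $C$ (if $\rank C\ge 2$) or on $G/C$ (by induction or the abelian case). Your approach is arguably more self-contained, since you do not invoke the Bludov--Glass--Rhemtulla result that \emph{every} convex jump of a bi-order on a locally nilpotent group is central; the paper uses this freely.

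Two remarks. First, the clause ``then $[G,G]\subseteq C$'' in the $\rank C=1$ case is false as stated (take the class-$3$ group $G=\langle x,y,z,w\mid [x,w]=y,\ [y,w]=z,\ \text{rest trivial}\rangle$, where the minimal convex subgroup is $\langle z\rangle$ but $[G,G]=\langle y,z\rangle$); fortunately you never use it, since the argument immediately bifurcates on whether $G/C$ is abelian, and $[G,G]\subseteq C$ holds precisely in the abelian subcase. Second, your closing paragraph asserts that for a general convex jump $C_i/C_{i-1}$ the conjugation action may be nontrivially unipotent; in fact for bi-orders on locally nilpotent groups every convex jump is central (this is the paper's \cref{CentralJumps}), so your worry is unfounded --- but your proof does not rely on this misconception, only on the centrality of the bottom jump, which you establish correctly.
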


We also prove some variants of the following recent result.

\begin{thm}[{T.\,Koberda \cite{Koberda-Faithful}}]
If $G$ is a finitely generated group that is residually torsion-free nilpotent, then the natural action of\/ $\Aut(G)$ on\/ $\LO(G)$ is faithful.
\end{thm}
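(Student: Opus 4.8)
The plan is to reduce to the case that $G$ is torsion-free nilpotent and then to induct on the Hirsch length, the recurring tool being a lexicographic transfer of orders along a normal subgroup. So first I would set up the transfer lemma: if $M\normal G$ and we are given a left-order on $M$ and one on $G/M$, then their lexicographic combination (compare images in $G/M$ first, break ties using the order on $M$) is a left-order on $G$ — one checks directly that the associated positive cone is closed under multiplication. Moreover, if $\phi\in\Aut(G)$ satisfies $\phi(M)=M$ and $\phi$ fails to preserve the chosen order on $M$, \emph{or} the induced automorphism of $G/M$ fails to preserve the chosen order there, then $\phi$ fails to preserve the lexicographic order on $G$: a sign reversal at either level is already visible inside $G$.

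For the reduction, let $D_i(G)$ be the $i$th term of the rational (isolated) lower central series of $G$; these are characteristic, each $G/D_i(G)$ is torsion-free nilpotent, and, for finitely generated $G$, residual torsion-free nilpotence is exactly the statement $\bigcap_i D_i(G)=1$ (standard). Given a nontrivial $\phi\in\Aut(G)$, choose $g$ with $g^{-1}\phi(g)\ne 1$ and then $i$ with $g^{-1}\phi(g)\notin D_i(G)$, and pass to $Q=G/D_i(G)$, on which $\phi$ induces a nontrivial automorphism $\bar\phi$. As a subgroup of $G$, $D_i(G)$ is again residually torsion-free nilpotent, hence left-orderable, so by the transfer lemma it suffices to find a left-order on $Q$ not preserved by $\bar\phi$. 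Thus everything reduces to the assertion that $\Aut(Q)$ acts faithfully on $\LO(Q)$ whenever $Q$ is finitely generated torsion-free nilpotent.

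I would prove this by induction on $h(Q)$, the case $h(Q)\le 1$ being immediate from $\Aut(\integer)=\{\pm1\}$. Let $\psi\in\Aut(Q)$, $\psi\ne\mathrm{id}$. \emph{Case 1: $\psi$ acts nontrivially on the torsion-free abelianization $Q/D_2(Q)\cong\integer^d$.} There it induces $A\in\mathrm{GL}_d(\integer)$ with $A\ne I$, and a short computation shows that the ``generic'' linear order $\prec_v$ on $\integer^d$ — where $x\prec_v y\iff\langle v,x\rangle<\langle v,y\rangle$ for a vector $v$ with $\rational$-linearly independent coordinates — is preserved by $A$ only if $A^{T}v$ is a positive scalar multiple of $v$, which fails for suitable $v$ as soon as $A\ne I$. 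Pulling $\prec_v$ back to $Q$ and invoking the transfer lemma finishes this case (specialized to abelian $Q$, this is a base of a subsidiary induction subsumed here). \emph{Case 2: $\psi$ is trivial on $Q/D_2(Q)$}; then $Q$ is nonabelian and $\psi$ is ``unipotent''. Crucially, every inner automorphism falls into this case, while inner automorphisms preserve \emph{every} bi-invariant order — so one is forced to build left-orders that are genuinely not bi-invariant. The strategy is to locate an isolated normal subgroup $M$ with $1\ne M\ne Q$, $\psi(M)=M$, and $\psi|_M\ne\mathrm{id}$, apply the inductive hypothesis to $M$, and transfer. If $\psi$ is nontrivial on $D_2(Q)$, take $M=D_2(Q)$. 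Otherwise $\psi$ is trivial on both $D_2(Q)$ and $Q/D_2(Q)$: writing $\psi(q)=q\,z(q)$ with $z(q)\in D_2(Q)$ and taking $j$ maximal with $z(Q)\subseteq D_j(Q)$, one checks that $z$ induces a nonzero homomorphism $Q/D_2(Q)\to D_j(Q)/D_{j+1}(Q)$; picking $v$ with $z(v)\notin D_{j+1}(Q)$, the isolator $M$ of the product of the normal closure of $v$ with $D_j(Q)$ has the required properties — it is $\psi$-invariant because $\psi(v)\in v\,D_j(Q)$, it is proper because $Q/D_2(Q)$ has rank at least $2$ (visible from the Malcev Lie algebra), and $\psi|_M\ne\mathrm{id}$ since $\psi(v)\ne v$ with $v\in M$.

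The hard part is precisely Case 2's residual subcase — automorphisms invisible both to every bi-invariant order and to the action on every lower central quotient, the prototype being conjugation by a non-central element. The delicate step is guaranteeing the simultaneous existence of the auxiliary subgroup $M$ with all of its properties at once; note that it cannot in general be chosen characteristic. The remaining ingredients — functoriality of the isolated lower central series, left-orderability of subgroups of residually torsion-free nilpotent groups, and the rank bound for nonabelian torsion-free nilpotent groups — are routine. I would expect that Koberda's own proof instead exploits the entire inverse system of torsion-free nilpotent quotients rather than passing to a single one.
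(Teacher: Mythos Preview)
The paper does not give its own proof of this theorem; it is quoted from \cite{Koberda-Faithful}. However, the paper's proofs of \cref{NilpCommFaithful} (via \cref{CommGKernel}) and \cref{CommMovesLOG} together constitute a proof, and that is the natural point of comparison.

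Your reduction from the residually torsion-free nilpotent case to the torsion-free nilpotent case is essentially identical to the paper's proof of \cref{CommMovesLOG}: pass to a single torsion-free nilpotent quotient $G/N$ on which the automorphism survives, and use lexicographic transfer along the convex (normal) subgroup~$N$. Your closing guess that Koberda works with the whole inverse system is therefore off; the paper, following Koberda, also passes to a single quotient.

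Where the two diverge is the nilpotent kernel. You induct on Hirsch length and split on whether $\psi$ is trivial on the torsion-free abelianization; in the delicate ``unipotent'' subcase you manufacture a proper isolated normal $\psi$-invariant subgroup~$M$ on which $\psi$ is nontrivial (using that $\psi \equiv \mathrm{id} \pmod{D_j(Q)}$ forces $\psi$ to preserve every subgroup containing $D_j(Q)$, and that $\rank(Q/[Q,Q]) \ge 2$ for nonabelian~$Q$). This is correct. The paper instead runs a dichotomy that avoids induction altogether: either some $g$ has $g^\alpha \notin \isol{g}$, in which case one builds an order with $g$ and $g^\alpha$ in distinct convex jumps and flips one of them; or every $g$ is an ``eigenvector,'' so $\alpha(g)=g^{r}$ for a single rational~$r$ (the eigenspace argument plus common centralizers in a locally nilpotent group), and then the commutator identity $[g,h]^r=[g^r,h^r]=[g,h]^{r^2}$ forces $r=1$ in the nonabelian case.

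Your route is more structural and makes the role of the lexicographic transfer lemma explicit, at the cost of a somewhat intricate construction of~$M$. The paper's route is shorter and requires no induction, the commutator trick disposing of the unipotent case in one line; it also scales immediately from $\Aut(G)$ to $\Comm(G)$. Both are valid proofs of the stated theorem.
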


Our modifications of Koberda's theorem replace the automorphism group of~$G$ with the larger group of abstract commensurators. Before stating these results, we present some background material.

\begin{defn}
Let $G$ be a group.
	\begin{itemize} \itemsep=\smallskipamount
	\item A \emph{commensuration} of~$G$ is an isomorphism $\phi \colon G_1 \to G_2$, where $G_1$ and~$G_2$ are finite-index subgroups of~$G$.
	\item Two commensurations $\phi \colon G_1 \to G_2$ and $\phi' \colon G_1' \to G_2'$ are \emph{equivalent} if there exists a finite-index subgroup~$H$ of $G_1 \cap G_1'$, such that $\phi$ and~$\phi'$ have the same restriction to~$H$.
	\item The equivalence classes of the commensurations of~$G$ form a group that is denoted $\Comm(G)$. It is called the \emph{abstract commensurator} of~$G$.
	\end{itemize}
\end{defn}

In general, there is no natural action of $\Comm(G)$ on $\LO(G)$, but the following observation provides a special case in which we do have such an action:

\begin{lem} \label{CommGActLO}
Let $G$ be a torsion-free, locally nilpotent group.
	\begin{enumerate}
	\item \label{CommGActLO-bij}
	If $H$ is any finite-index subgroup of~$G$, then the natural restriction map\/ $\LO(G) \to \LO(H)$ is a bijection.
	\item \label{CommGActLO-act}
	Therefore, there is a natural action of\/ $\Comm(G)$ on $\LO(G)$.
	\end{enumerate}
\end{lem}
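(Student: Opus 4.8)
The plan is to prove the bijectivity, part~\pref{CommGActLO-bij}; the $\Comm(G)$-action, part~\pref{CommGActLO-act}, then follows formally. A commensuration $\phi\colon G_1\to G_2$ between finite-index subgroups induces the composite $\LO(G)\to\LO(G_1)\to\LO(G_2)\to\LO(G)$, in which the outer arrows are the bijections of part~\pref{CommGActLO-bij} (restriction to~$G_1$, and the inverse of restriction to~$G_2$) and the middle arrow is transport of structure along~$\phi$; equivalent commensurations give the same composite (restrict everything to the common finite-index subgroup on which they agree), and composing commensurations composes the associated maps, so one gets an action of $\Comm(G)$.

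For part~\pref{CommGActLO-bij}, fix a finite-index subgroup~$H$; the restriction map $r\colon\LO(G)\to\LO(H)$ is plainly continuous, and I would establish injectivity first. The key elementary fact is that in any left-invariant order $g\succ e\iff g^n\succ e$ for each fixed $n\ge1$: if $g\succ e$ then $g^{k+1}=g^k g\succ g^k$, so $g^n\succ e$ by induction, and, $G$ being torsion-free, $g^n=e$ only when $g=e$. Since $[G:H]<\infty$, every $g\in G$ has a power $g^n$ ($n\ge1$) lying in~$H$; hence if $\prec_1,\prec_2\in\LO(G)$ restrict to the same order on~$H$, then $g\succ_1 e\iff g^n\succ_1 e\iff g^n\succ_2 e\iff g\succ_2 e$ for every~$g$, so $\prec_1=\prec_2$.

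For surjectivity, I would first reduce to the case that $G$ is finitely generated, hence finitely generated torsion-free nilpotent: a left order on~$G$ is the same as a compatible family of left orders on the finitely generated subgroups $F\le G$, each $F\cap H$ has finite index in~$F$, and by the injectivity just proved any extensions of the orders $\prec|_{F\cap H}$ to the various~$F$ are automatically compatible, so it is enough to extend $\prec$ from $F\cap H$ to~$F$. Now let $\prec\in\LO(H)$ have positive cone~$Q$. The only possible extension is the \emph{isolator} $P:=\{\,g\in G\mid g^m\in Q \text{ for some } m\ge1\,\}$: one has $P\cap H=Q$ and, exactly as in the injectivity argument, the sign of every element of~$G$ is forced. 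Using torsion-freeness, it is immediate that $G=P\cup\{e\}\cup P^{-1}$ with $P\cap P^{-1}=\emptyset$, so the whole problem reduces to proving $P\cdot P\subseteq P$; then $P$ is the positive cone of a left order on~$G$ restricting to~$\prec$.

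Proving $P\cdot P\subseteq P$ is the heart of the matter, and the step I expect to be hardest. A naive induction on nilpotency class — passing to the quotient by (the isolator of) the centre — does not work, since a left-invariant order need not descend to a quotient; one should instead use the convex subgroups of $(H,\prec)$. Because $H$ is polycyclic these form a finite chain $\{e\}=C_0\subset C_1\subset\cdots\subset C_k=H$, each $C_i$ is isolated in~$H$ (convex subgroups always are), and, by H\"older's theorem applied to the jumps, $C_{i-1}\normal C_i$ with $C_i/C_{i-1}$ torsion-free abelian; thus $\prec$ is the lexicographic order assembled from translation-invariant orders $\bar\prec_i$ on the quotients $C_i/C_{i-1}$. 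Passing to isolators $\hat C_i:=\{\,g\in G\mid g^m\in C_i \text{ for some } m\ge1\,\}$ in the nilpotent group~$G$, one uses that isolators are subgroups, that $\hat C_i\cap H=C_i$ (since $C_i$ is isolated in~$H$), and the standard inclusion $[\sqrt{A},\sqrt{B}]\subseteq\sqrt{[A,B]}$ for isolators in nilpotent groups, together with $[C_i,C_i]\subseteq C_{i-1}$, to get $[\hat C_i,\hat C_i]\subseteq\hat C_{i-1}$; hence $\hat C_{i-1}\normal\hat C_i$ with $\hat C_i/\hat C_{i-1}$ torsion-free abelian and containing $C_i/C_{i-1}$ as a finite-index subgroup. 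By the (easy) abelian case of the lemma, each $\bar\prec_i$ extends to a translation-invariant order on $\hat C_i/\hat C_{i-1}$, and the lexicographic order on~$G$ attached to the chain $\{e\}=\hat C_0\normal\cdots\normal\hat C_k=G$ and these extended orders is a left-invariant order whose positive cone is exactly~$P$ and which restricts to~$\prec$. This establishes $P\cdot P\subseteq P$. The main work is this structural analysis: producing the finite normal convex-subgroup chain with torsion-free abelian jumps for~$H$, and checking that the isolators $\hat C_i$ in~$G$ reassemble into the required lexicographic extension.
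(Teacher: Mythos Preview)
Your argument is correct. The injectivity step and the formal derivation of the $\Comm(G)$-action match the paper exactly. For surjectivity, however, the paper simply invokes Rhemtulla's extension theorem (stated in the preliminaries as \cref{ExtendLONilp}): any left-invariant order on \emph{any} subgroup of a torsion-free, locally nilpotent group extends to the whole group. Your approach instead reproves the finite-index case of that theorem from scratch, via the convex-subgroup chain, isolators, the commutator inclusion $[\sqrt{A},\sqrt{B}]\subseteq\sqrt{[A,B]}$, and a lexicographic reassembly. This is sound (your uses of polycyclicity to get a finite chain, of the Archimedean property of the jumps to get torsion-free abelian quotients, and of isolated-ness of convex subgroups are all correct), and it has the virtue of being self-contained and of making explicit \emph{why} the unique extension exists in the finite-index situation; it also dovetails with the Lie-theoretic description in \cref{LieGrpLO}. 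The paper's route is much shorter because it treats Rhemtulla's theorem as a black box, at the cost of hiding the mechanism you expose.
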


This action allows us to state the following variant of Koberda's theorem that replaces $\Aut(G)$ with $\Comm(G)$, but, unfortunately, requires $G$ to be locally nilpotent, not just residually nilpotent. 

\begin{prop} \label{NilpCommFaithful}
If $G$ is a nonabelian, torsion-free, locally nilpotent group, then the action of\/ $\Comm(G)$ on\/ $\LO(G)$ is faithful.
\end{prop}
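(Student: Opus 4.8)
The goal is to show that if $\phi \in \Comm(G)$ acts trivially on $\LO(G)$, then $\phi$ is the identity element of $\Comm(G)$, i.e., $\phi$ agrees with the identity on some finite-index subgroup of~$G$. Represent $\phi$ by a commensuration $\phi \colon G_1 \to G_2$ with $G_1, G_2$ of finite index in~$G$. By \fullcref{CommGActLO}{bij}, the restriction maps $\LO(G) \to \LO(G_1)$ and $\LO(G) \to \LO(G_2)$ are bijections, and the hypothesis that $\phi$ acts trivially on $\LO(G)$ translates, via these identifications, into the statement that for every $\prec\, \in \LO(G_1)$, the order $\phi(\prec)$ on~$G_2$ has the same restriction to $G_1 \cap G_2$ as $\prec$ does. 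Equivalently, replacing $G_1$ and~$G_2$ by $G_1 \cap G_2 \cap \phi^{-1}(G_1 \cap G_2)$ and its image (and using that a commensuration restricted to a finite-index subgroup represents the same element of $\Comm(G)$), we may assume $G_1 = G_2 =: H$ and $\phi \colon H \to H$ is an automorphism of the finitely-generated-adjacent group~$H$ that preserves every left-invariant order on~$H$. Since $H$ is again torsion-free, nonabelian, and locally nilpotent, it suffices to prove: an automorphism of such an~$H$ that fixes every point of $\LO(H)$ must be the identity. Note this is a statement purely about $\Aut(H)$, so in particular it recovers the relevant case of Koberda's theorem, but the proof can be self-contained using nilpotence.

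**Reducing to a finitely generated subgroup and using the lower central series.** The plan is to show $\phi(x) = x$ for each $x \in H$. Fix $x \in H$ with $x \neq 1$; we want to rule out $\phi(x) \neq x$. Work inside a finitely generated subgroup $K$ of~$H$ that contains $x$ and $\phi(x)$ and is nonabelian (possible since $H$ is nonabelian — enlarge to include two non-commuting elements); such a $K$ is a finitely generated torsion-free nilpotent group. The key structural input is that in a torsion-free nilpotent group, left-invariant orders are abundant and can be built from the lower central series: each successive quotient $\gamma_i(K)/\gamma_{i+1}(K)$ is a finitely generated torsion-free abelian group, and by choosing an order on each quotient (equivalently, embedding it cofinally in $\real$, i.e. a "rank-one order" determined by a homomorphism to $\real$ that is injective on the quotient, or more precisely a point in the order space of the quotient) and combining lexicographically, one produces many points of $\LO(K)$. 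The crucial observation is that $\phi$, preserving all orders on~$H$, in particular must be compatible with enough of these orders to constrain where $x$ can go.

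**The core argument — separating $x$ from $\phi(x)$ by an order.** If $\phi(x) \neq x$ then $y := x^{-1}\phi(x) \neq 1$. The heart of the proof will be to produce a left-invariant order $\prec$ on~$H$ for which $\phi$ fails to be order-preserving, giving a contradiction; equivalently, since $\phi$ preserves $\prec$ iff $\phi^{-1}$ does, it suffices to find $\prec$ and elements $a,b$ with $a \prec b$ but $\phi(a) \not\prec \phi(b)$, and the natural choice is to detect the discrepancy between $x$ and $\phi(x)$. Concretely: let $j$ be the largest index with $x \in \gamma_j(K)$ (the "depth" of~$x$ in the lower central series of a suitable f.g. nonabelian nilpotent $K \ni x, \phi(x)$, with $\gamma_{c+1}(K) = 1$), and consider the image $\bar x$ of~$x$ in the torsion-free abelian group $A := \gamma_j(K)/\sqrt{\gamma_{j+1}(K)}$ (taking the isolator so the quotient is torsion-free). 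The philosophy is that a left-invariant order on $K$ that is lexicographic with respect to the lower central series is, on the bottom nonzero piece, governed by a linear functional on~$A$; by choosing that functional generically we can make the order behave however we like on $\bar x$ versus $\overline{\phi(x)}$ (if these differ in $A$), and we can arrange the sign of $\bar x$ relative to the identity to flip under~$\phi$ unless $\overline{\phi(x)} = \bar x$. Iterating up the central series forces $\phi(x) = x$ modulo every $\gamma_i$, hence $\phi(x) = x$ since $K$ is nilpotent.

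**Main obstacle.** The delicate point — and where I expect to spend the most effort — is the case where $\bar x = \overline{\phi(x)}$ in the abelianized bottom piece $A$, so that $x$ and~$\phi(x)$ are only distinguished higher up (or when $y = x^{-1}\phi(x)$ lies deep in the central series). One must argue that $\phi$ preserving \emph{all} left-invariant orders, not just the lexicographic ones built from one chosen chain, is enough leverage; the clean way is probably an inductive argument on nilpotency class: pass to $K/Z$ for a suitable central $\integer$ (or $\real$)-subgroup $Z$ of~$K$ on which $\phi$ is already known to act trivially (by induction or by the bottom-piece argument), lift orders from $K/Z$, and deduce $\phi(x)x^{-1} \in Z$; then a separate argument handles the central case, where $x$ is central and one uses that $\phi$ must preserve the two orders on a rank-one central subgroup in compatible ways while the nonabelian part of~$K$ provides "room" (via conjugation, i.e. right-invariance is not assumed, but left-invariant orders on the nonabelian $K$ still pin down central elements up to their position). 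I would also need \cref{CommGActLO} and the torsion-free hypothesis essentially, to pass freely between $G$ and its finite-index subgroups and to guarantee all the relevant quotients in the isolator-refined lower central series are torsion-free abelian of finite rank, hence carry the orders I need.
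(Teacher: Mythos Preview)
Your reduction to an automorphism of a single finite-index subgroup is not valid. Setting $H = G_1 \cap G_2 \cap \phi^{-1}(G_1 \cap G_2)$ gives $\phi(H) \subseteq G_1 \cap G_2$, but there is no reason for $\phi(H)$ to equal~$H$, and iterating does not help. Indeed, the commensuration $\tau^{p/q}$ of an abelian group (with $p \neq q$) fixes every left-invariant order yet restricts to an automorphism of no finite-index subgroup; exactly this phenomenon is what the nonabelian hypothesis must rule out, so you cannot assume it away at the outset. This is repairable --- one can simply work with $\phi \colon G_1 \to G_2$ and compare $x$ with $\phi(x)$ inside~$G$ --- but as written the reduction is wrong.

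The more serious gap is in your ``core argument.'' You claim that if $\bar x \neq \overline{\phi(x)}$ in the abelian quotient~$A$, then a generic linear functional lets you flip the sign of $\bar x$ relative to $\overline{\phi(x)}$. That is false when $\overline{\phi(x)} = r\,\bar x$ for some rational $r > 0$: every order on~$A$ gives $\bar x$ and $r\,\bar x$ the same sign, so no lexicographic order built from the central series will witness $x \succ e$ and $\phi(x) \prec e$. Your ``main obstacle'' paragraph worries about the case $\bar x = \overline{\phi(x)}$, but the genuinely hard case is $\phi(x) \in \isol{x}$ with $\phi(x) \neq x$, and nothing in your outline addresses it. The paper handles precisely this: after showing (essentially by your convex-jump idea) that $g^\alpha \in \isol{g}$ for every~$g$, it writes $\alpha(g) = g^{r(g)}$, uses a common central element to show $r(g) \equiv r$ is constant, and then picks $g,h$ generating a class-$2$ subgroup to compute
\[
[g,h]^{r} = [g,h]^\alpha = [g^\alpha,h^\alpha] = [g^{r},h^{r}] = [g,h]^{r^{2}},
\]
forcing $r = 1$. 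This commutator identity is the missing idea; your inductive sketch on nilpotency class does not supply it, since passing to abelian quotients is exactly where scaling by~$r$ becomes invisible to left-orders.
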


\begin{rem}
The \namecref{NilpCommFaithful} assumes that $G$ is nonabelian. When $G$ is abelian, \cref{WhenAbelFaithful} shows that the action of $\Comm(G)$ is faithful iff the subgroup~$G^n$ of $n^{\text{th}}$ powers has infinite index in~$G$, for all $n \ge 2$.
\end{rem}

For non-nilpotent groups, $\Comm(G)$ may not act on $\LO(G)$, but it does act on a certain space $\VLO(G)$ that contains $\LO(G)$ \csee{VirtLOSect}. (It is a space of left-invariant orders on finite-index subgroups of~$G$.) This action allows us to state the following generalization of Koberda's Theorem:

\begin{cor} \label{CommMovesLOG}
If $G$ is a nonabelian group that is residually locally torsion-free nilpotent, and $\alpha$ is any nonidentity element of\/ $\Comm(G)$, then there exists ${\prec} \in \LO(G)$, such that ${\prec}^\alpha \neq {\prec}$. 
\end{cor}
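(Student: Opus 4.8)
The plan is to reduce to \cref{NilpCommFaithful} by pushing the situation into a locally torsion-free nilpotent quotient of~$G$ and transporting orders in both directions. Write $\alpha$ as the class of a commensuration $\phi\colon G_1\to G_2$ between finite-index subgroups of~$G$. Unwinding the definition of the action of $\Comm(G)$ on $\VLO(G)$, the equality ${\prec}^\alpha={\prec}$ (in $\VLO(G)$) means exactly that there is a finite-index subgroup $H\le G_1$ on which $\phi$ carries the positive cone of~$\prec$ to itself, i.e.\ with $h\succ 1\Leftrightarrow\phi(h)\succ 1$ for all $h\in H$. So the goal is: find ${\prec}\in\LO(G)$ such that for \emph{every} finite-index subgroup $H\le G_1$ there is $h\in H$ with $h\succ 1$ but $\phi(h)\prec 1$ (replacing $h$ by~$h^{-1}$ absorbs the opposite-sign case into this one).

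Since $\alpha\neq 1$, the restriction of $\phi$ to any finite-index subgroup of $G_1\cap G_2$ differs from the identity, so I can pick $g\in G_1\cap G_2$ with $t:=g^{-1}\phi(g)\neq 1$. Using that $G$ is residually locally torsion-free nilpotent and nonabelian, I choose a normal subgroup $N\normal G$, with quotient map $\pi\colon G\to Q:=G/N$, such that $Q$ is locally torsion-free nilpotent, $Q$ is nonabelian, and $t\notin N$; the second property puts us in range of \cref{NilpCommFaithful}, and the third (after further shrinking~$N$) should force $\phi$ to induce a \emph{nontrivial} element $\bar\alpha\in\Comm(Q)$, say represented by a commensuration $\bar\phi$ of~$Q$ with $\pi\circ\phi=\bar\phi\circ\pi$ on a suitable finite-index subgroup. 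Granting this, \cref{NilpCommFaithful} produces $\bar{\prec}\in\LO(Q)$ with $\bar{\prec}^{\bar\alpha}\neq\bar{\prec}$.

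Now I lift $\bar{\prec}$ to an order on~$G$. As a subgroup of the residually locally torsion-free nilpotent group~$G$, the group~$N$ is itself residually locally torsion-free nilpotent, hence bi-orderable (a product of bi-orderable groups is bi-orderable, and $N$ embeds in a product of its bi-orderable quotients), so it admits a left-invariant order~${\prec_N}$. Define $\prec$ on~$G$ lexicographically: $x\prec y$ if $\pi(x)\mathrel{\bar{\prec}}\pi(y)$, or $\pi(x)=\pi(y)$ and $1\prec_N x^{-1}y$. Then $\prec$ is left-invariant, $N$ is $\prec$-convex, and $\prec$ induces $\bar{\prec}$ on~$Q$; in particular, every $x\notin N$ has the same $\prec$-sign as its $\bar{\prec}$-image~$\pi(x)$. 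Suppose, for contradiction, that $\phi$ preserved the positive cone of~$\prec$ on a finite-index subgroup $H\le G_1$. For $h\in H$ with $\pi(h)\neq 1$ we get $\pi(\phi(h))=\bar\phi(\pi(h))\neq 1$, and comparing $\prec$-signs with $\bar{\prec}$-signs shows $\bar h\succ 1\Leftrightarrow\bar\phi(\bar h)\succ 1$ in~$Q$; that is, $\bar\phi$ preserves the positive cone of~$\bar{\prec}$ on the finite-index subgroup $\pi(H)\le Q$. Since restriction of orders is a bijection for the locally torsion-free nilpotent group~$Q$ (see \fullcref{CommGActLO}{bij}), this forces $\bar{\prec}^{\bar\alpha}=\bar{\prec}$, contrary to the choice of~$\bar{\prec}$. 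Hence ${\prec}^\alpha\neq{\prec}$, proving the corollary.

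The step I expect to be the real obstacle is the middle one: producing a quotient~$Q$ that is at once locally torsion-free nilpotent, nonabelian, and faithful to $\alpha$ in the sense that $\phi$ descends to a nontrivial element of~$\Comm(Q)$. There need be no finite-index subgroup of~$G$ on which $\phi$ is defined and which is $\phi$-invariant modulo~$N$, so descent of~$\phi$ to~$Q$ is not automatic; and even when it descends it could become trivial (for instance if $\alpha$ is conjugation by an element lying in~$N$). Getting around this — choosing~$N$ via the residual hypothesis together with suitable characteristic (e.g.\ verbal) subgroups so that $\phi$ descends and stays nontrivial, and, if a single quotient does not exhibit the required elements~$h$ in every finite-index subgroup of~$G_1$, replacing it by a well-ordered cofinal family of such quotients and building $\prec$ as a lexicographic tower over them — is where the argument must do its work. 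The lexicographic lift and the sign-chasing verification above are routine once an appropriate quotient (or tower of quotients) is available.
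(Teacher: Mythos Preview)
Your overall strategy---pass to a locally torsion-free nilpotent quotient, invoke \cref{NilpCommFaithful}, and lift the resulting order lexicographically---is exactly the paper's approach, and your lexicographic lift and sign-chasing are correct. The gap you yourself flag (getting $\phi$ to descend to $Q=G/N$) is the genuine missing idea, and your speculative fixes (verbal subgroups, towers of quotients) are unnecessary detours.

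The paper resolves the descent problem by reorganising the argument as a proof by contradiction: assume $\alpha$ fixes \emph{every} element of $\LO(G)$. Reversing the order on any convex jump produces another element of $\LO(G)$, and since $\alpha$ must also fix that one, $\alpha$ preserves every convex jump of every left-order. Concretely, for any convex subgroup $C$ (with respect to any left-order) one gets $\alpha^{-1}(C)=C\cap\dom\alpha$. Now choose $g\in\dom\alpha$ with $g^\alpha\neq g$ and, using the residual hypothesis together with the nonabelianness of~$G$, pick $N\normal G$ with $G/N$ locally torsion-free nilpotent, nonabelian, and $g^\alpha\notin gN$. Build \emph{any} lexicographic order on~$G$ over an order on $G/N$; then $N$ is convex for this order, so by the preceding paragraph $\alpha$ preserves~$N$ and therefore descends to some $\bar\alpha\in\Comm(G/N)$, automatically nontrivial because $g^\alpha\notin gN$. \Cref{NilpCommFaithful} then gives $\ll\in\LO(G/N)$ with $\ll^{\bar\alpha}\neq\ll$, and its lexicographic lift to~$G$ is moved by~$\alpha$, contradicting the assumption.

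The point is that you were trying to force descent for a \emph{single} well-chosen order, which is hard; assuming $\alpha$ fixes \emph{all} orders gives you convex-subgroup preservation for free, and that is precisely the condition needed for descent.
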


The following is an immediate consequence:

\begin{cor} \label{CommFaithful}
If $G$ is a nonabelian group that is residually locally torsion-free nilpotent, then the action of\/ $\Comm(G)$ on\/ $\VLO(G)$ is faithful.
\end{cor}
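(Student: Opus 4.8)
The plan is to read \cref{CommFaithful} off directly from \cref{CommMovesLOG}: the former is just the latter, rephrased in terms of the kernel of an action. Recall that an action of a group on a set is \emph{faithful} precisely when no nonidentity element fixes every point. So I would begin with an arbitrary nonidentity element $\alpha$ of $\Comm(G)$ and try to produce a single point of $\VLO(G)$ that $\alpha$ does not fix.

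Since the hypotheses of \cref{CommFaithful} are exactly those of \cref{CommMovesLOG}, that result applies and yields some ${\prec} \in \LO(G)$ with ${\prec}^\alpha \neq {\prec}$. Because $\LO(G)$ is a subset of $\VLO(G)$, this ${\prec}$ is a point of $\VLO(G)$, and it is moved by~$\alpha$. Hence $\alpha$ does not lie in the kernel of the action of $\Comm(G)$ on $\VLO(G)$. As $\alpha$ ranged over all nonidentity elements of $\Comm(G)$, the kernel is trivial, i.e., the action is faithful.

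I do not expect any real obstacle here: all of the substance sits in \cref{CommMovesLOG}, and the present \namecref{CommFaithful} merely repackages it. The only point to keep straight is that, since $\Comm(G)$ need not act on $\LO(G)$ itself when $G$ is not locally nilpotent, the symbol ${\prec}^\alpha$ in \cref{CommMovesLOG} is to be interpreted through the action on $\VLO(G)$ via the inclusion $\LO(G) \subseteq \VLO(G)$ set up in \cref{VirtLOSect}; once that reading is in force, the implication is immediate.
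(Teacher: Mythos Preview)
Your proposal is correct and matches the paper's approach: the paper simply states that \cref{CommFaithful} is an immediate consequence of \cref{CommMovesLOG}, and your argument is precisely the natural unpacking of that implication. There is nothing to add.
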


There is a natural action of the outer automorphism group $\Out(G)$ on $\BO(G)$, because every inner automorphism acts trivially on this space.
T.\,Koberda \cite[\S6]{Koberda-Faithful} observed that if $G$ is the fundamental group of the Klein bottle, then this action is not faithful. (Note that this group~$G$ is solvable. In fact, it is polycyclic and metabelian). In \cref{NonFaithfulSect}, we improve this example by exhibiting finitely generated, nilpotent groups for which the action is not faithful. (Like Koberda's, our groups are polycyclic and metabelian.)

\begin{ack}
I would like to thank the participants in the workshop on ``Ordered Groups and Topology'' (Banff International Research Station, Alberta, Canada, February 12--17, 2012) for the stimulating lectures and discussions that instigated this research.
I would also like to thank the mathematics department of the University of Chicago for its hospitality during the preparation of this manuscript.
An anonymous referee also deserves thanks for providing an extraordinarily prompt report that included helpful comments on the exposition.
\end{ack}

\section{Preliminaries} \label{PrelimSect}

\subsection{Preliminaries on nilpotent groups}

\begin{defn}[{}{\cite[p.~85 (1)]{LennoxRobinson-ThyInfSolvGrps}}]
If $G$ is a solvable group, then, by definition, there is a subnormal series  
	\begin{align*}
	 G = G_r \lamron G_{r-1} \lamron \cdots \lamron G_1 \lamron G_0 = \{e\} 
	,  \end{align*}
such that each quotient $G_i/G_{i-1}$ is abelian. The \emph{Hirsch rank} of~$G$ is sum of the (torsion-free) ranks of these abelian groups.  (This is also known as the \emph{torsion-free rank} of~$G$.) More precisely,
	$$ \rank G = \sum_{i = 1}^r \dim_{\rational} \bigl( (G_i/G_{i-1}) \otimes \rational \bigr) .$$
It is not difficult to see that this is independent of the choice of the subnormal series.
\end{defn}

\begin{notation}
Let $S$ be a subset of a group~$G$.
\noprelistbreak
	\begin{itemize}
	\item As usual, we use $\langle S \rangle$ to denote the smallest subgroup of~$G$ that contains~$S$.
	\item We let
		$$ \isol{S}_G = \bigset{x \in G }{ \exists m \in \integer^+, x^m \in \langle S \rangle }.$$
	When the group~$G$ is clear from the context, we usually omit the subscript, and write merely $\isol{S}$.
	\end{itemize}
\end{notation}

\begin{lem}[{}{\cite[2.3.1(i)]{LennoxRobinson-ThyInfSolvGrps}}]
If $G$ is a locally nilpotent group, then $\isol{S}$ is a subgroup of~$G$, for all $S \subseteq G$.
\end{lem}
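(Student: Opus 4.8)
The plan is to dispose of the trivial closure properties, reduce to a finitely generated (hence nilpotent) group, and then induct on the nilpotency class.

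Write $H_* = \isol{S}$. That $e \in H_*$ is clear, and if $x^k \in \langle S\rangle$ then $(x^{-1})^k = (x^k)^{-1} \in \langle S\rangle$, so $H_*$ is closed under inversion; the only real issue is closure under multiplication. So suppose $x, y \in H_*$, say $x^m, y^n \in \langle S\rangle$ with $m, n \in \integer^+$, and set $K = \langle x, y\rangle$. Since $G$ is locally nilpotent, $K$ is a finitely generated nilpotent group; let $H = \langle S\rangle \cap K$, so that $x^m, y^n \in H$. It suffices to find $e \in \integer^+$ with $(xy)^e \in H$ (since $H \subseteq \langle S\rangle$), so the whole problem reduces to the following assertion: \emph{if $K$ is a finitely generated nilpotent group generated by two elements $x, y$, and $H \le K$ contains $x^m$ and $y^n$ for some $m, n \in \integer^+$, then $(xy)^e \in H$ for some $e \in \integer^+$.}

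I would prove this by induction on the nilpotency class $c$ of $K$. If $c \le 1$ then $K$ is abelian and $(xy)^{mn} = (x^m)^n (y^n)^m \in H$. If $c \ge 2$, let $A = \gamma_c(K)$, the last nontrivial term of the lower central series: $A$ is central (hence abelian), and $K/A$ has class $\le c - 1$. Write $N = mn$, so that $x^N, y^N \in H$ (as $m, n \mid N$). Applying the inductive hypothesis in $K/A$ — which is generated by the images of $x$ and $y$, with $H$ replaced by its image — produces $k \in \integer^+$ with $(xy)^k \in HA$, say $(xy)^k = ha$ with $h \in H$ and $a \in A$. As $a$ is central it commutes with $h$, so $(xy)^{kq} = (ha)^q = h^q a^q$ for all $q$; hence it is enough to find $q \in \integer^+$ with $A^q \subseteq H$, for then $e = kq$ works.

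Establishing $A^q \subseteq H$ is the crux, and the step that genuinely uses the theory of nilpotent groups. Since $A = \gamma_c(K)$ is generated by the weight-$c$ basic commutators in $x$ and $y$, and $\gamma_{c+1}(K) = \{e\}$, a routine commutator calculation shows that each such commutator $w$ satisfies $w(x^N, y^N) = w(x, y)^{N^c}$; because $w(x^N, y^N)$ lies in $\langle x^N, y^N\rangle \subseteq H$ and $A$ is abelian, this gives $A^{N^c} \subseteq H$, so we may take $q = N^c$. (Equivalently, $\langle x^N, y^N\rangle$ has finite index in $K$, so $\gamma_c(\langle x^N, y^N\rangle)$ has finite index in $A$, and one may take $q$ to be that index.) Everything else is bookkeeping: the initial reduction to finitely generated $K$ is what makes the class induction available, and is the only place where local nilpotence (as opposed to nilpotence) enters.
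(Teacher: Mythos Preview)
The paper does not actually prove this lemma; it simply cites it from \cite[2.3.1(i)]{LennoxRobinson-ThyInfSolvGrps} and moves on. So there is nothing in the paper to compare your argument against.

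Your proof is correct. The reduction to the two-generator nilpotent subgroup $K = \langle x, y\rangle$ is exactly the right move and is the only place local nilpotence is used. The induction on class is clean: the base case is immediate, and in the inductive step the key computation $w(x^N, y^N) = w(x,y)^{N^c}$ for weight-$c$ commutators in a class-$c$ group is the standard multilinearity of the iterated commutator map $K^c \to \gamma_c(K)$ when $\gamma_{c+1}(K) = \{e\}$. Since $\gamma_c(K)$ is finitely generated abelian (as $K$ is finitely generated nilpotent), the conclusion $A^{N^c} \subseteq H$ follows, and $e = kN^c$ does the job. Your parenthetical alternative --- that $\langle x^N, y^N\rangle$ has finite index in $K$, so some power of $A$ lands in $H$ --- is also valid and is essentially the route taken in standard references; in fact the result you are proving is often packaged as the statement that in a finitely generated nilpotent group the set of elements with a power in a given subgroup is itself a subgroup (equivalently, torsion elements in the quotient set form a subgroup), which is the content of the cited result in Lennox--Robinson.
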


\begin{rem}
A subgroup~$H$ is said to be \emph{isolated} if $H = \isol{H}$, but we do not need this terminology.
\end{rem}

We provide a proof of the following well-known fact, because we do not have a convenient reference for it.

\begin{lem} \label{Rank(Abelianization)}
If $G$ is a finitely generated, nilpotent group, and $\rank G \ge 2$, then 
	$$\rank \bigl( G/ [G,G] \bigr) \ge 2 .$$
\end{lem}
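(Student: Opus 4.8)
The plan is to prove the contrapositive: if $\rank\bigl(G/[G,G]\bigr) \le 1$, then $\rank G \le 1$. Write $\gamma_1 = G \lamron \gamma_2 = [G,G] \lamron \gamma_3 \lamron \cdots$ for the lower central series. Since $G$ is finitely generated and nilpotent, this series reaches $\{e\}$ after finitely many steps, say $\gamma_{c+1} = \{e\}$, and each quotient $\gamma_i/\gamma_{i+1}$ is a finitely generated abelian group. Read from the bottom up, this is a subnormal series with abelian quotients, so the formula defining the Hirsch rank (together with its independence of the chosen series) gives
\[
	\rank G = \sum_{i=1}^{c} \rank(\gamma_i/\gamma_{i+1}) = \sum_{i=1}^{c} \dim_\rational\bigl( (\gamma_i/\gamma_{i+1}) \otimes \rational \bigr) = \dim_\rational L,
\]
where $L := \bigoplus_{i=1}^{c} (\gamma_i/\gamma_{i+1}) \otimes \rational$.

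Next I would invoke the standard fact that $\bigoplus_i \gamma_i/\gamma_{i+1}$ is the associated graded Lie ring of the lower central series, with bracket induced by the group commutator $[\gamma_i,\gamma_j] \le \gamma_{i+j}$; tensoring with $\rational$ makes $L$ a graded Lie algebra over~$\rational$. The key structural observation is that $L$ is generated, as a Lie algebra, by its degree-one component $L_1 = \bigl(G/[G,G]\bigr) \otimes \rational$: since $\gamma_{i+1} = [\gamma_i, G]$, the degree-$(i{+}1)$ piece satisfies $L_{i+1} = [L_i, L_1]$, so induction on~$i$ shows every $L_i$ lies in the subalgebra generated by~$L_1$.

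Now the (negated) hypothesis reads $\dim_\rational L_1 = \rank\bigl(G/[G,G]\bigr) \le 1$. If $L_1 = 0$, then $L = 0$ and $\rank G = 0$. Otherwise $L_1 = \rational v$ is one-dimensional; but the Lie subalgebra generated by a single element~$v$ is just $\rational v$, because $[v,v] = 0$ forces every iterated bracket of copies of~$v$ to vanish. Since $L$ is generated by~$L_1$, we get $L = \rational v$, hence $\rank G = \dim_\rational L \le 1$, contradicting $\rank G \ge 2$.

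The step I expect to be the real crux is the passage to the rational Lie algebra. The naive approach — iterating the commutator-induced surjections $\bigl(G/[G,G]\bigr) \otimes (\gamma_i/\gamma_{i+1}) \twoheadrightarrow \gamma_{i+1}/\gamma_{i+2}$ — only yields $\rank(\gamma_i/\gamma_{i+1}) \le \rank\bigl(G/[G,G]\bigr)$ for each~$i$, and hence the far-too-weak bound $\rank G \le c \cdot \rank\bigl(G/[G,G]\bigr)$. It is precisely the identity $[v,v]=0$ in the graded Lie algebra that collapses the estimate to $\rank G \le 1$; everything else (finiteness of the lower central series, existence of the associated graded Lie ring, and the behaviour of rank under $\otimes\,\rational$) is routine.
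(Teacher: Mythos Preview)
Your argument is correct, but it proceeds by a genuinely different route from the paper's. You pass to the associated graded rational Lie algebra $L=\bigoplus_i(\gamma_i/\gamma_{i+1})\otimes\rational$, observe that it is generated in degree one, and use the trivial fact that a Lie algebra generated by a single element is one-dimensional. The paper instead works directly inside~$G$: for an arbitrary $g\in G$ it builds a subnormal chain of isolated subgroups $\isol{g}=G_0\normal\cdots\normal G_s=G$, using that normalizers of proper isolated subgroups are strictly larger; refining so that the top quotient $G/G_{s-1}$ has rank~$1$ (hence is abelian) forces $[G,G]\subseteq G_{s-1}$, whence $\isol{g,[G,G]}\neq G$ for every~$g$, which is exactly the statement $\rank\bigl(G/[G,G]\bigr)\ge 2$. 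Your approach is cleaner and more conceptual, and your final paragraph correctly identifies why the Lie-algebra passage is essential rather than decorative; the paper's approach, by contrast, stays within the isolated-subgroup machinery it has already set up and will reuse elsewhere.
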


\begin{proof}
For every proper subgroup~$H$ of~$G$, such that $\isol{H} = H$, we have $N_G(H) = \isol{N_G(H)}$ \cite[2.3.7]{LennoxRobinson-ThyInfSolvGrps} and $N_G(H) \supsetneq H$ \cite[Cor.~10.3.1, p.~154]{Hall-ThyGrps}. This implies $\rank N_G(H) > \rank H$, so, for any $g \in G$, there is a subnormal series
	$$ \isol{g} = G_0 \normal G_1 \normal \cdots \normal G_{s-1} \normal G_s = G ,$$
with $\isol{G_i} = G_i$ for every~$i$. By refining the series, we may assume $1 + \rank G_{s-1} = \rank G$.
Then $G/ G_{s-1}$ is a torsion-free, nilpotent group of rank~1, and is therefore abelian (cf.\ \cite[2.3.9(i)]{LennoxRobinson-ThyInfSolvGrps}), so $[G,G] \subseteq G_{s-1}$. Since $G_{s-1}$ also contains~$g$, we conclude that $\isol{g, [G,G]} \neq G$. This implies $\rank \bigl( G/ [G,G] \bigr) \ge 2$, because $g$ is an arbitrary element of~$G$.
\end{proof}

\subsection{Preliminaries on ordered groups}

\begin{defn}[{}{\cite[pp.~29, 31, and 34]{KopytovMedvedev-ROGrps}}]
Let $\prec$ be a left-invariant order on a group~$G$.
	\begin{itemize}
	\item A subgroup $C$ of~$G$ is \emph{convex} if, for all $c,c' \in C$, and all $g \in G$, such that $c \prec g \prec c'$, we have $g \in C$.
	\item We say that $C_2/C_1$ is a \emph{convex jump} if $C_1$ and~$C_2$ are convex subgroups, and $C_1$ is the maximal convex proper subgroup of~$C_2$.
	\item A convex jump $C_2/C_1$ is \emph{Archimedean} if there is a nontrivial homomorphism $\varphi \colon C_2 \to \real$, such that, for all $c,c' \in C_2$, we have $\varphi(c) < \varphi(c') \Rightarrow c \prec c'$. (Since $C_1$ is the maximal convex subgroup of~$C_2$, it is easy to see that this implies $\ker \varphi = C_1$.)
	\end{itemize}
\end{defn}

\begin{rem}[{}{\cite[Thm.~2.1.1, p.~31]{KopytovMedvedev-ROGrps}}] \label{SetOfConvexSubgrps}
 If $\prec$ is a left-invariant order on a group~$G$, then it is easy to see that the set of convex subgroups is totally ordered under inclusion, and is closed under arbitrary intersections and unions. Therefore, each element~$g$ of~$G$ determines a convex jump $C_2(g)/C_1(g)$, defined by letting
 	\begin{itemize}
	\item $C_2(g)$ be the (unique) smallest convex subgroup of~$G$ that contains~$g$, 
	and
	\item $C_1(g)$ be the (unique) largest convex subgroup of~$G$ that does not contain~$g$.
	\end{itemize}
 \end{rem}

The following easy observation is well known:

\begin{lem}[{cf.\ \cite[Lem.~5.2.1, p.~132]{KopytovMedvedev-ROGrps}}] \label{ChangeOnQuot}
Let 
	\begin{itemize}
	\item $\prec$ be a left-invariant order on a group~$G$,
	\item $C_1$ and~$C_2$ be convex subgroups of~$G$, such that $C_1 \normal C_2$,
	\item $\overline{\phantom{x}} \colon C_2 \to C_2/C_1$ be the natural homomorphism,
	and
	\item $\ll$ be a left-invariant order on the group $C_2/C_1$.
	\end{itemize}
Then there is a\/ {\upshape(}unique\/{\upshape)} left-invariant order~$\precstar$ on~$G$, such that, for $g \in G$, we have
	$$ g \succstar e \iff
	\begin{cases}
	\overline{g} \gg \overline{e} & \text{if $g \in C_2$ and $g \notin C_1$}, \\
	g \succ e & \text{otherwise}
	. \end{cases}
	$$
\end{lem}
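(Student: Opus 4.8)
The plan is to define $\precstar$ directly by specifying its positive cone $P = \{\, g \in G \mid g \succstar e \,\}$, and then check that $P$ is the positive cone of a left-invariant order. Concretely, I would set
	$$ P = \bigl( C_2 \setminus C_1 \text{-part} \bigr) \cup \bigl( \text{old positive elements outside that jump} \bigr), $$
that is: for $g \in C_2 \setminus C_1$ put $g \in P$ iff $\overline g \gg \overline e$; for $g \notin C_2 \setminus C_1$ (i.e.\ $g \in C_1$ or $g \notin C_2$) put $g \in P$ iff $g \succ e$. Recall that a subset $P \subseteq G$ is the positive cone of a (unique) left-invariant order exactly when (i) $G = P \sqcup \{e\} \sqcup P^{-1}$ and (ii) $P$ is closed under multiplication; the associated order is $x \precstar y \iff x^{-1} y \in P$. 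So the whole lemma reduces to verifying (i) and (ii) for the $P$ above, and then observing that the displayed formula for $g \succstar e$ is just the definition of $P$ unwound.

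For (i): if $g \in C_1$ or $g \notin C_2$, then $g^{-1}$ is in the same class (since $C_1$ and $C_2$ are subgroups), and trichotomy for $P$ on these elements is inherited verbatim from trichotomy for $\succ$. If $g \in C_2 \setminus C_1$, then also $g^{-1} \in C_2 \setminus C_1$, and trichotomy follows from trichotomy for $\gg$ on $C_2/C_1$ together with the fact that $\overline g = \overline e$ would force $g \in C_1$. For (ii), closure under products, I would split into cases according to where the two factors $g,h$ lie. The only interaction to watch is between an element of $C_2 \setminus C_1$ and an element of $C_2$ or of $G \setminus C_2$; here is where convexity does the real work. The key point is that for a positive element $g \in C_2 \setminus C_1$ and any $h$ with $h \succ e$ but $h \notin C_2$ (so $h$ "dominates" the jump, because $C_2$ is convex and $g \in C_2$ forces $g \prec h$... and more: every element of $C_2$ lies strictly between $h^{-1}$ and $h$), convexity of $C_2$ gives $gh \notin C_2$ and $gh \succ e$, so $gh \in P$; symmetric bookkeeping handles $hg$, products of two large elements, products landing inside $C_1$, etc. The case $g, h \in C_2 \setminus C_1$ with $gh \in C_1$ is handled by noting $\overline{gh} = \overline e$ while $\overline g, \overline h \gg \overline e$ contradicts $\ll$ being an order on $C_2/C_1$, so this case is vacuous; and when $gh \in C_2 \setminus C_1$ we use that $\overline{\phantom x}$ is a homomorphism so $\overline{gh} = \overline g\,\overline h \gg \overline e$.

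The main obstacle, such as it is, is purely organizational: the verification of closure under multiplication has on the order of a dozen sub-cases depending on the membership of $g$, $h$, and $gh$ in $C_1$, $C_2 \setminus C_1$, and $G \setminus C_2$, and one must use convexity in exactly the form "$C_2$ convex $\Rightarrow$ an element outside $C_2$ is $\succ$-larger (in absolute value) than everything in $C_2$" to rule out the bad combinations. There is no conceptual difficulty, so I would present the construction of $P$, state the two positive-cone axioms, dispatch the easy cases in a sentence, and spell out only the one or two cases where convexity is invoked. Uniqueness is immediate: a left-invariant order is determined by its positive cone, and the displayed condition pins down $P$ completely.
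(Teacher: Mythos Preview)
The paper does not actually prove this lemma: it is stated with a reference to \cite[Lem.~5.2.1]{KopytovMedvedev-ROGrps} and then immediately used, so there is no ``paper's own proof'' to compare against. Your approach---specify the positive cone~$P$ and verify the two axioms $G = P \sqcup \{e\} \sqcup P^{-1}$ and $PP \subseteq P$---is exactly the standard one and is correct. The case analysis you outline is sound; in particular, your use of convexity (``an element outside $C_2$ that is $\succ e$ dominates everything in $C_2$'') is precisely what is needed, and your handling of the potentially tricky case $g,h \in C_2 \setminus C_1$ with $gh \in C_1$ is right. One small point worth making explicit when you write it up: in the mixed case $g \in C_2 \setminus C_1$, $h \notin C_2$, $h \succ e$, the element $g$ need not satisfy $g \succ e$ in the \emph{old} order, so to get $hg \succ e$ you should argue via convexity that $h^{-1} \prec g$ (sandwich $h^{-1}$ between $g$ and $e$ if not), rather than appealing to closure of the old positive cone. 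You already have the right tool for this; just be careful not to conflate ``$g \in P$'' with ``$g \succ e$'' for elements of $C_2 \setminus C_1$.
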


\begin{defn}
The construction in \cref{ChangeOnQuot} is called \emph{changing $\prec$ on $C_2/C_1$}.
\end{defn}

\begin{lem}[{}{\cite[Thm.~2.4.2, p.~41]{KopytovMedvedev-ROGrps}}] \label{Nilp->Conradian}
If $\prec$ is a left-invariant order on a group~$G$ that is locally nilpotent, then every convex jump is Archimedean.
\end{lem}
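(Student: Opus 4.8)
The plan is to deduce this lemma from \emph{Conrad's theorem}, which characterizes the left-invariant orders all of whose convex jumps are Archimedean as exactly the \emph{Conradian} orders: those orders~$\prec$ for which, given any $f$ and~$g$ in the positive cone, there is an integer $n \ge 1$ with $fg^n \succ g$ (this is Conrad's theorem; see~\cite{KopytovMedvedev-ROGrps}). So it suffices to verify this \emph{Conrad property} for~$\prec$. Since the inequality $fg^n \succ g$ involves only the subgroup $\langle f, g \rangle$, which is finitely generated, nilpotent (because $G$ is locally nilpotent), and left-ordered by the restriction of~$\prec$, we may assume from the outset that $G$ itself is finitely generated and nilpotent; note that it is then also torsion-free, since it is left-orderable.

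Next I would induct on the nilpotency class~$c$ of~$G$. If $c \le 1$, then $G$ is abelian, and $g^{-1}(fg) = f \succ e$, so $fg \succ g$ and the Conrad property holds with $n = 1$. Suppose then that $c \ge 2$. The idea for the inductive step is to pass to the quotient $\overline{G} = G/Z$ by a suitable nontrivial central subgroup~$Z$, so that $\overline{G}$ is finitely generated and nilpotent of class less than~$c$; verify the Conrad inequality in~$\overline{G}$ using the inductive hypothesis; and lift the conclusion back to~$G$.

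The main obstacle — and what I take to be the real content of this lemma — is that $Z$ need not be convex for~$\prec$, so~$\prec$ does not descend to a left-invariant order on~$\overline{G}$ in any direct fashion. One way to address this is to first replace~$\prec$ by an order obtained by repeatedly ``changing~$\prec$'' on convex jumps (\cref{ChangeOnQuot}), chosen so as to make the relevant central subgroup convex while leaving undisturbed the particular inequality $fg^n \succ g$ under test. Alternatively, one can work inside the convex jump $C_2(g)/C_1(g)$ determined by~$g$ (see \cref{SetOfConvexSubgrps}): since $g^{-1}fg^n = (g^{-1}fg)\, g^{n-1}$ and $g^{-1}fg = f[f,g]$, with the commutator~$[f,g]$ — and, on iterating, the higher commutators $[f,g,g], \dots$ — lying in ever deeper terms of the lower central series of~$G$, the claim reduces to showing that these ``correction'' terms are dominated, relative to~$\prec$, by a fixed power of~$g$; in particular one must rule out that $g^{-1}f^{-1}g$ lies strictly above~$C_2(g)$. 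Carrying out this estimate while keeping careful track of convexity as one descends the lower central series is the crux of the argument; once it is in hand, $fg^n \succ g$ follows for suitable~$n$, and with it the lemma.
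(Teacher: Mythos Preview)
The paper does not give its own proof of this lemma: it is stated with a citation to \cite[Thm.~2.4.2, p.~41]{KopytovMedvedev-ROGrps} and then used as a black box. So there is no argument in the paper to compare your proposal against.

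As for the proposal itself, the overall strategy --- reduce to verifying Conrad's property, use local nilpotence to pass to a finitely generated nilpotent subgroup, and then exploit the commutator structure --- is standard and sound. But the write-up stops exactly at the point where the work begins. Your first suggested route (``change~$\prec$'' on convex jumps via \cref{ChangeOnQuot} so as to make a central subgroup convex) is problematic: the Conrad inequality $fg^n \succ g$ is a statement about the \emph{given} order~$\prec$, not about a modification of it, and there is no mechanism offered for transferring the inequality back. Your second route (commutator estimates inside the convex jump determined by~$g$) is the right idea, but you explicitly leave undone the key step --- ruling out that conjugation by~$g$ can throw $f$ above $C_2(g)$, and controlling the iterated commutators against powers of~$g$. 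Saying ``this is the crux of the argument; once it is in hand, the lemma follows'' is a description of what remains to be proved, not a proof. If you want a self-contained argument, either supply those estimates in full, or simply cite the result as the paper does.
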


\begin{rem}
A left-invariant order is said to be \emph{Conradian} if all of its convex jumps are Archimedean, but we do not need this terminology.
\end{rem} 

\begin{lem}[{}{\cite[p.~227]{BludovGlassRhemtulla-JumpsCentral}}] \label{CentralJumps}
If $\prec$ is a bi-invariant order on a group~$G$ that is locally nilpotent, then every convex jump $C_2/C_1$ is central. {\upshape(}This means $[G, C_2] \subseteq C_1$.{\upshape)} Therefore, every convex subgroup of~$G$ is normal.
\end{lem}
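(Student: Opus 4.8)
The statement to prove is that for a bi-invariant order $\prec$ on a locally nilpotent group $G$, every convex jump $C_2/C_1$ is central, i.e. $[G,C_2]\subseteq C_1$. The plan is to reduce everything to the finitely generated case and then exploit the Archimedean structure of the jump. First I would observe that convexity and the relation $[G,C_2]\subseteq C_1$ are both "finitary" in the sense that a commutator $[g,c]$ with $g\in G$, $c\in C_2$ lies in the finitely generated subgroup $\langle g,c\rangle$, and the convex subgroups $C_1,C_2$ restrict to convex subgroups of $\langle g,c\rangle$ for the restricted (still bi-invariant) order. So it suffices to establish the claim when $G$ is finitely generated, hence genuinely nilpotent.

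Now assume $G$ is finitely generated nilpotent and let $C_2/C_1$ be a convex jump. By \Cref{Nilp->Conradian} the jump is Archimedean, so there is a nontrivial homomorphism $\varphi\colon C_2\to\real$ with $\ker\varphi=C_1$ and $\varphi(c)<\varphi(c')\Rightarrow c\prec c'$. The key step is to show that the action of $G$ by conjugation on $C_2/C_1$ is trivial. Since $C_2/C_1$ embeds in $\real$ via $\varphi$, it is torsion-free abelian, and conjugation by any $g\in G$ induces an automorphism of $C_2/C_1$; because the order induced on $C_2/C_1$ is the pullback of the order of $\real$, this automorphism must be order-preserving (here I would use that $\prec$ is bi-invariant, so conjugation preserves $\prec$, hence preserves the induced order on the quotient, hence $\varphi\circ(\text{conj by }g)$ and $\varphi$ induce the same order on $C_2/C_1$). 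An order-preserving automorphism of a subgroup of $\real$ is multiplication by a positive real number $\lambda(g)$, and $g\mapsto\lambda(g)$ is a homomorphism from $G$ to the multiplicative group $\real^+\cong\real$. Since $G$ is nilpotent and finitely generated, $G/[G,G]$ is finitely generated, but — and this is where nilpotence really enters — a nontrivial homomorphism to $\real^+$ would have to kill all commutators, and I need it to be trivial. The cleanest route: the image $\lambda(G)$ is a finitely generated subgroup of $\real^+$; if it were nontrivial, pick $g_0$ with $\lambda(g_0)\ne 1$, WLOG $\lambda(g_0)>1$; then for any $c\in C_2\setminus C_1$ the elements $g_0^{-n}c g_0^n$ have $\varphi$-values $\lambda(g_0)^{-n}\varphi(c)\to 0$, so they are eventually trapped strictly between $e$ and $c$ in the order, forcing them into the convex subgroup generated by... actually the sharper contradiction uses that in a nilpotent group conjugation is "unipotent": the commutator $[g_0,c]$, then $[g_0,[g_0,c]]$, etc., eventually vanish, which forces $\lambda(g_0)$ to be a root of unity among the reals, hence $\lambda(g_0)=1$.

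So the heart of the argument is: conjugation by $g_0$ acts on the abelian group $C_2/C_1\hookrightarrow\real$ as a unipotent automorphism (because $G$ is nilpotent, the lower central series terminates, so some iterated commutator $[g_0,[g_0,[\dots,[g_0,c]\dots]]]$ lands in $C_1$ for all $c$), and a unipotent order-preserving automorphism of a nonzero subgroup of $\real$ is the identity — indeed $\lambda(g_0)^k=1$ for some $k\ge1$ forces $\lambda(g_0)=1$ since $\lambda(g_0)>0$. Hence $[g_0,C_2]\subseteq C_1$ for every $g_0\in G$, which is exactly $[G,C_2]\subseteq C_1$. Finally, centrality of every jump immediately gives normality: if $C$ is convex and $C=C_2(g)$ for some $g$ (equivalently $C$ is any convex subgroup, using \Cref{SetOfConvexSubgrps}), then $[G,C]\subseteq C_1\subseteq C$, so $C\normal G$; and one handles a general convex subgroup by writing it as a union of the $C_2(g)$ it contains, or by the same commutator argument applied directly.

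The main obstacle I anticipate is making the "order-preserving + nilpotent $\Rightarrow$ trivial" step fully rigorous: one must be careful that $\varphi$ is only defined on $C_2$ (not on $G$), that conjugation by $g$ does map $C_2$ to $C_2$ and $C_1$ to $C_1$ (which follows from normality of convex subgroups under bi-invariance — but that is part of what we are proving, so instead one should argue directly that $g^{-1}C_2 g$ is again convex, hence comparable to $C_2$, hence equal since both are the smallest convex subgroup containing a conjugate of a given element), and that the induced map on $C_2/C_1$ really is multiplication by a scalar. Once those bookkeeping points are nailed down, the unipotence-forces-identity observation closes the proof cleanly.
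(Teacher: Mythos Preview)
The paper does not prove this lemma at all; it merely cites \cite[p.~227]{BludovGlassRhemtulla-JumpsCentral}. So there is no ``paper's own proof'' to compare against, and your plan must be judged on its own merits.

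Your overall strategy is the standard one and is essentially correct: the jump $C_2/C_1$ is Archimedean by \cref{Nilp->Conradian}, so it embeds order-preservingly in~$\real$; conjugation by $g\in G$ then acts on it by multiplication by some $\lambda(g)>0$; and nilpotence forces $\lambda(g)=1$. Two points need tightening. First, a minor slip: the unipotence computation gives $(\lambda(g)-1)^{k}\,\varphi(c)=0$, hence $\lambda(g)=1$ directly; you wrote $\lambda(g)^{k}=1$, which is a different (though also sufficient) equation and is not what the iterated-commutator calculation actually produces.

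The genuine gap is exactly the one you flag at the end: to define the conjugation action of $g$ on $C_2/C_1$ you need $g$ to normalise both $C_1$ and~$C_2$, and your proposed fix does not work. Knowing that $g^{-1}C_2g$ is convex and hence comparable to~$C_2$ only tells you that $g^{-1}C_2g$ is the smallest convex subgroup containing $g^{-1}cg$; there is no a priori reason this equals the smallest convex subgroup containing~$c$, since $c$ and $g^{-1}cg$ could sit at different levels of the convex chain. (Indeed, for general bi-ordered groups this can fail.) A clean repair, once you have passed to the finitely generated nilpotent subgroup $H=\langle g,c\rangle$, is to observe that $H$ has only finitely many convex subgroups: each convex jump embeds in~$\real$ and so contributes at least~$1$ to the Hirsch rank, which is finite. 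Conjugation by~$g$ is then an order-preserving bijection of this finite chain onto itself, hence the identity; so every convex subgroup of~$H$ is normal in~$H$, and your scalar argument goes through. With this in place (and a small check that $C_1\cap H$ and $C_2\cap H$ are the right convex subgroups to feed into the Archimedean argument), your proof is complete.
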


\begin{lem}[{}{cf.\ \cite[Prop.~1.7]{Sikora-TopOrders}}] \label{AbelIsol->Rank1}
Let $G$ be a nontrivial, abelian group. If the space of bi-invariant orders on~$G$ has an isolated point, then $\rank G = 1$.
\end{lem}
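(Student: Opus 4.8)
The plan is to reduce the statement to a fact about left‑invariant orders and then either invoke the theorem of Navas quoted in the introduction or prove that fact directly. Since $\BO(G)$ is nonempty (it has an isolated point), $G$ admits a left‑invariant order, hence is torsion‑free, so $\rank G\ge 1$; and since $G$ is abelian, every left‑invariant order is automatically bi‑invariant, so $\BO(G)=\LO(G)$. Thus the claim is equivalent to: if $G$ is torsion‑free abelian with $\rank G\ge 2$, then $\LO(G)$ has no isolated points. This is exactly the case of the Navas theorem in which $G$ happens to be abelian, so one may simply cite that result; since the statement is instead attributed to Sikora, I would also give a direct argument. Fix ${\prec}\in\LO(G)$ and a finite set $F\subseteq G$; after replacing some elements by their inverses, assume $e\prec f$ for all $f\in F$, put $H=\langle F\rangle$, and aim to produce ${\prec'}\in\LO(G)$ with ${\prec'}\ne{\prec}$ but $e\prec' f$ for all $f\in F$. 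I would distinguish cases according to whether $\isol{H}=G$.

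If $\isol{H}\ne G$, then $G/\isol{H}$ is nontrivial and torsion‑free, so there is a nonzero homomorphism $\psi\colon G\to\rational$ with $H\subseteq\ker\psi$. Defining ${\prec_\pm}$ by ``$g\succ_\pm e$ iff $\pm\psi(g)>0$, or $\psi(g)=0$ and $g\succ e$'' gives two left‑invariant orders that both restrict to $\prec$ on $\ker\psi\supseteq F$ but disagree on any $g$ with $\psi(g)\ne 0$; hence one of them serves as ${\prec'}$. If instead $\isol{H}=G$, then $G/H$ is torsion, so $d:=\rank G=\rank H$ is finite and $\ge 2$, and $G$ embeds in $\rational^{d}$, hence is countable. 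Here I would use that the convex subgroups of $\prec$ form a finite chain $\{e\}=K_0\subsetneq K_1\subsetneq\dots\subsetneq K_m=G$ (finite because each $K_i/K_{i-1}$ is nontrivial and torsion‑free, hence of rank $\ge 1$) whose jumps are Archimedean, by \cref{Nilp->Conradian}, via homomorphisms $\varphi_i\colon K_i\to\real$ with $\ker\varphi_i=K_{i-1}$. If some jump $K_j/K_{j-1}$ has rank $\ge 2$, then it is a noncyclic, hence dense, subgroup of $\real$ whose order is the pullback under the injective map $\overline\varphi_j\colon K_j/K_{j-1}\to\real$ of the order of $\real$; replacing one coordinate of the corresponding injective $\rational$‑linear map $(K_j/K_{j-1})\otimes\rational\to\real$ by a small generic real number keeps it injective (only countably many values are excluded, as $K_j/K_{j-1}$ is countable), keeps the finitely many open sign conditions imposed by $F$, and yields a genuinely different order on the jump; changing $\prec$ on $K_j/K_{j-1}$ to this order via \cref{ChangeOnQuot} gives ${\prec'}$. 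If every jump has rank $1$, then $m=d$, and I would pick a nonzero $\psi\colon G\to\rational$ with $\ker\psi=K_{d-1}$ and a homomorphism $\chi\colon G\to\real$ extending $\varphi_{d-1}$; for all sufficiently small $\epsilon>0$, the order ${\prec_\epsilon}$ given by ``$g\succ_\epsilon e$ iff $\psi(g)+\epsilon\chi(g)>0$, or $\psi(g)+\epsilon\chi(g)=0$ and $g\succ e$'' is left‑invariant, agrees with $\prec$ on $F$ (if $\psi(f)>0$ this is clear; if $\psi(f)=0$ then $f\in K_{d-1}$ and $e\prec f$ forces $\varphi_{d-1}(f)\ge 0$, with equality only when $f\in K_{d-2}$, where the $\prec_\epsilon$‑sign is the $\prec$‑sign), and differs from $\prec$, since for $a\in K_{d-1}\setminus K_{d-2}$ with $\varphi_{d-1}(a)>0$ and $b\in G\setminus K_{d-1}$ with $\psi(b)>0$ one has $b\succ na$ for all $n$ under $\prec$ but $na\succ_\epsilon b$ once $n$ is large.

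The step I expect to be the main obstacle—assuming one wants the self‑contained proof rather than the one‑line appeal to Navas's theorem—is the rank‑$\ge 2$ Archimedean jump: one must perturb an order‑preserving embedding of a dense subgroup of $\real$ while keeping the perturbed map injective, preserving finitely many prescribed signs, and ensuring that the resulting order (not merely the chosen functional) actually changes. The remaining steps are the routine lexicographic construction and bookkeeping with the finite chain of convex subgroups.
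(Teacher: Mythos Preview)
The paper does not prove this lemma; it simply records it with the citation to Sikora's Proposition~1.7, treating it as a known input. Your initial reduction (abelian $\Rightarrow \BO(G)=\LO(G)$; orderable $\Rightarrow$ torsion-free) followed by an appeal to the abelian case of Navas's theorem is therefore already at least as much as the paper supplies, and is correct and non-circular (the paper invokes \cref{AbelIsol->Rank1} only in the proof of the \emph{bi-invariant} no-isolated-points result, not of Navas's left-invariant version).

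Your self-contained argument goes well beyond what the paper offers and is essentially correct. A few small remarks: in Case~1 you should note explicitly that $\prec_+$ and $\prec_-$ disagree with each other outside $\ker\psi$, so at most one can equal~$\prec$; in Subcase~2a your perturbation yields a linear functional that is not a positive scalar multiple of the old one (since only one coordinate moved and the rank is $\ge 2$), and two injective $\rational$-linear maps to~$\real$ induce the same order iff they are positive scalar multiples, so the order genuinely changes; in Subcase~2b the extension $\chi$ exists because $\real$ is divisible, and your verification that $\prec_\epsilon$ agrees with $\prec$ on~$F$ and differs globally is correct. None of these are gaps, just points worth making explicit if you write it out in full.
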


\begin{thm}[{Rhemtulla \cite[Cor.~3.6.2, p.~66]{KopytovMedvedev-ROGrps}}] \label{ExtendLONilp}
If $G$ is a torsion-free, locally nilpotent group, then any left-invariant order on any subgroup of~$G$ extends to a left-invariant order on all of~$G$.
\end{thm}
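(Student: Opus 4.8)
The plan is a Zorn's-lemma argument, carried out after a standard compactness reduction to the case where $G$ is finitely generated, torsion-free, and nilpotent: any obstruction to extending $\prec$ from~$H$ to~$G$ --- a finite set of order-relations forced by~$\prec$ that becomes inconsistent --- would already be visible inside a finitely generated subgroup of~$G$, so it is enough to show that $\prec|_{H \cap F}$ extends to~$F$ for every finitely generated subgroup $F \le G$ (each such~$F$ being again finitely generated, torsion-free, and nilpotent). Assume from now on that $G$ is finitely generated, torsion-free, and nilpotent.

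Now apply Zorn's lemma: the union of an increasing chain of subgroups, each equipped with a left-invariant order and with the orders agreeing on overlaps, is a subgroup with a left-invariant order, so there is a maximal pair $(K, \prec_K)$ with $H \subseteq K \subseteq G$ and $\prec_K$ a left-invariant order on~$K$ extending~$\prec$; I claim $K = G$. The first step is to reduce to the case $K = \isol{K}_G$ by extending $\prec_K$ to $\isol{K}_G$. Every $x \in \isol{K}_G$ has a positive power in~$K$, so its sign in any extension is forced; let $P^* \subseteq \isol{K}_G$ be the set of elements whose powers in~$K$ are positive for~$\prec_K$. The trichotomy conditions for~$P^*$ are routine (using that $G$ is torsion-free, that positive elements of an ordered group have positive powers, and that $\isol{K}_G$ is a subgroup). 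For closure under multiplication, take $x, y \in P^*$ and set $F = \langle x, y \rangle$; then $F \subseteq \isol{K}_G$, so $F$ is a finitely generated nilpotent group in which every element has a power in $F \cap K$, and hence $[F : F \cap K] < \infty$. By \fullcref{CommGActLO}{bij}, the order $\prec_K|_{F \cap K}$ therefore extends to a left-invariant order on~$F$; that order must agree with~$P^*$ on~$F$ (again signs are determined by powers), so it makes~$xy$ positive. Thus $\prec_K$ extends to $\isol{K}_G$, and by maximality $K = \isol{K}_G$.

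To finish, note that $K$, being a proper subgroup of the nilpotent group~$G$, is properly contained in its normalizer $N_G(K)$ (Hall). Choose $g \in N_G(K) \setminus K$ and set $L = \langle K, g \rangle = K \langle g \rangle$; then $K \normal L$, and $L/K$ is cyclic and torsion-free (an element of~$L$ with a power in~$K$ lies in $\isol{K}_G = K$), so $L/K \cong \integer$. Writing each element of~$L$ uniquely as $kg^n$ with $k \in K$ and $n \in \integer$, declare $kg^n$ to be positive exactly when $n > 0$, or $n = 0$ and $k \succ_K e$; this is a positive cone on~$L$, hence gives a left-invariant order on~$L$ extending~$\prec_K$. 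That contradicts the maximality of $(K, \prec_K)$, so $K = G$, and the theorem follows.

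The step I expect to be the main obstacle is the closure of~$P^*$ under multiplication --- in essence, the fact that a left-invariant order genuinely extends over a finite-index subgroup. This is where torsion-freeness and nilpotence are really needed (morally it is the statement that $G$ sits compatibly inside its Mal'cev completion), and it is exactly the content of \fullcref{CommGActLO}{bij}, which I would quote rather than reprove --- its surjectivity for finite-index subgroups being itself established directly, by induction on the nilpotency class, with the divisible-hull observation for subgroups of~$\integer^n$ as the base case. The other ingredients --- the compactness reduction and the normalizer condition for nilpotent groups --- are respectively soft and classical, the latter being the same Hall/Lennox--Robinson material already used in the proof of \cref{Rank(Abelianization)}.
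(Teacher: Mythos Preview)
The paper does not give its own proof of this theorem: it is quoted from the literature (Rhemtulla, via Kopytov--Medvedev) and used as a black box, so there is no argument here to compare yours against.

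On the substance of your proposal, there is a circularity you half-acknowledge but do not close. Your key step --- closure of $P^*$ under multiplication --- appeals to \fullcref{CommGActLO}{bij}, but in this paper the surjectivity half of that lemma is \fullcref{LOG->LOH}{surj}, whose proof \emph{is} a citation of \cref{ExtendLONilp}. You propose to break the loop by proving the finite-index extension case independently, ``by induction on the nilpotency class, with the divisible-hull observation for subgroups of~$\integer^n$ as the base case,'' but you do not carry this out, and the inductive step is not clear: passing to $G/Z(G)$ does not interact well with an arbitrary left order on a finite-index subgroup~$H$, since $H \cap Z(G)$ need not be convex in~$H$, so the order on~$H$ does not descend. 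The finite-index case is in fact the heart of the theorem (once you have it, your Zorn-plus-normalizer argument is fine), and a complete proof needs an actual argument there --- e.g.\ via the Mal'cev completion and the Conradian structure of left orders on nilpotent groups --- rather than a pointer to a lemma whose proof in this paper already presupposes the result.
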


\section{Topology of the space of bi-invariant orders} \label{TopBiInvtSect}

In this section, we prove \textbf{\cref{BONoIsolated}}:
\noprelistbreak
	\begin{quote} 
	\it If $G$ is a locally nilpotent group, and $G$ is not an abelian group of rank\/~$\le 1$, then the space of bi-invariant orders on~$G$ has no isolated points. 
	\end{quote}

\begin{proof}[\bf Proof of \cref{BONoIsolated}]
Suppose $\prec$ is an isolated point in the space of bi-invariant orders on~$G$. By definition of the topology on $\BO(G)$, this means there is a finite subset~$S$ of~$G$, for which $\prec$ is the unique bi-invariant order on~$G$ that satisfies $g \succ e$ for all $g \in S$.

If we change $\prec$ on any convex jump $C_i/C_{i-1}$, then the resulting left-invariant order will actually be bi-invariant (since \cref{CentralJumps} tells us that the jump is central). Therefore, the fact that $\prec$ is isolated implies that it has only finitely many convex jumps. (Indeed, every convex jump must be determined by some element of the finite set~$S$.) Thus, we may let
	\begin{align} \label{ConvexChain} 
	 G = C_r \supsetneq C_{r-1} \supsetneq \cdots \supsetneq C_1 \supsetneq C_0 = \{e\} 
	 \end{align}
be the chain of convex subgroups. From \cref{CentralJumps}, we know that this is a central series (so $G$ is nilpotent, not just locally nilpotent, as originally assumed). The fact that $\prec$ is isolated also implies that each convex jump $C_i/C_{i-1}$ has an isolated left-invariant order. Then, since the jump is a nontrivial abelian group, \cref{AbelIsol->Rank1} tells us that $\rank (C_i/C_{i-1}) = 1$.
So $\rank G = r$.

Let
	\begin{align} \label{UCC}
	 G = Z_c \lamron Z_{c-1} \lamron \cdots \lamron Z_1 \lamron Z_0 = \{e\} 
	 \end{align}
be the the upper central series of~$G$. (It is defined by letting $Z_i/Z_{i-1}$ be the center of $G/Z_{i-1}$.) Then $c$ is the nilpotence class of~$G$. It is well known that 
$c < \rank G$ (for example, this follows from \cref{Rank(Abelianization)}),
which means $c <  r$. So there is some~$k$ with $C_k \neq Z_k$, and we may assume $k$~is minimal. Since $\{C_i\}$ is a central series, and $\{Z_i\}$ is the upper central series, we have 
$C_k \subseteq Z_k$ \cite[Thm.~10.2.2]{Hall-ThyGrps}. 
Therefore, there exists some $z \in Z_k$, such that $z \notin C_k$.

Choose $\ell$ minimal with $z \in C_\ell$. (Note that $k \le \ell-1$.) Since \pref{ConvexChain} and \pref{UCC} are central series, we have $[G, C_{\ell-1}] \subseteq C_{\ell-2}$ and 
	$$ [G, z] \subseteq [G,Z_k] \subseteq Z_{k-1} = C_{k-1} \subseteq C_{\ell-2} .$$
Therefore $[ G, \langle C_{\ell-1}, z \rangle ] \subseteq C_{\ell-2}$.
Since $\rank ( C_\ell/C_{\ell-1} ) = 1$, we know that $C_\ell / \langle C_{\ell-1}, z \rangle$ is a torsion group, so this implies that $[G, C_\ell] \subseteq C_{\ell-2}$ \cite[2.3.9(vi)]{LennoxRobinson-ThyInfSolvGrps}. This means that $G$ centralizes $C_\ell/ C_{\ell-2}$, so changing $\prec$ on $C_\ell/ C_{\ell-2}$ will result in another bi-invariant order. Since $C_\ell/ C_{\ell-2}$ is an abelian group of rank~$2$, \cref{AbelIsol->Rank1} tells us that it has no isolated order, so we conclude that $\prec$ is not isolated. This is a contradiction.
\end{proof}

\begin{cor} \label{BOCantor}
Let $G$ be a locally nilpotent group that is not an abelian group of rank\/~$\le 1$. If $G$ is countable and torsion-free, then the space of bi-invariant orders on~$G$ is homeomorphic to the Cantor set.
\end{cor}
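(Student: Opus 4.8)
The plan is to invoke the standard topological characterization of the Cantor set: a topological space is homeomorphic to the Cantor set if and only if it is nonempty, compact, Hausdorff, metrizable (equivalently, second countable), totally disconnected, and has no isolated points. I would verify each of these properties in turn for $\BO(G)$.

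First I would recall that $\BO(G)$ is a closed subset of $\LO(G)$, and $\LO(G)$ is compact and Hausdorff, so $\BO(G)$ is compact and Hausdorff. Since $G$ is torsion-free, locally nilpotent, and not abelian of rank $\le 1$, \cref{ExtendLONilp} (applied to a left-invariant order on a rank-one subgroup, or more simply the fact that torsion-free locally nilpotent groups are left-orderable) guarantees $\LO(G) \ne \varnothing$; one then must check $\BO(G) \ne \varnothing$ — this follows because torsion-free nilpotent groups (indeed torsion-free locally nilpotent groups) admit bi-invariant orders, obtained by refining the central series, so $\BO(G)$ is nonempty. Since $G$ is countable, $\LO(G)$ is second countable (the basic open sets $\{\prec : x \prec y\}$ form a countable base), hence so is the subspace $\BO(G)$; a second countable compact Hausdorff space is metrizable. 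Total disconnectedness follows because $\LO(G)$ has a base of clopen sets: each set $\{\prec : x \prec y\}$ is open, and its complement $\{\prec : y \prec x\} \cup \{\prec \,: x = y\}$ is also open (when $x \ne y$), so these sets are clopen and separate points; the subspace $\BO(G)$ inherits a base of clopen sets, hence is totally disconnected.

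Finally, the crucial input is that $\BO(G)$ has no isolated points, which is precisely \cref{BONoIsolated}, applicable since by hypothesis $G$ is locally nilpotent and not an abelian group of rank $\le 1$. Having assembled all six properties, I would conclude by Brouwer's characterization that $\BO(G)$ is homeomorphic to the Cantor set.

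I do not expect any real obstacle here: every ingredient is either quoted from the excerpt (\cref{BONoIsolated}, \cref{ExtendLONilp}, the topological facts about $\LO(G)$) or is a routine point-set-topology verification. The only point requiring a small argument rather than a citation is the nonemptiness of $\BO(G)$, and even that is standard for torsion-free (locally) nilpotent groups; alternatively one can note that if $\BO(G)$ were empty the statement would be vacuous or false, but in fact torsion-free locally nilpotent groups are well known to be bi-orderable, so this is not a genuine difficulty.
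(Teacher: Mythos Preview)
Your proposal is correct and is precisely the intended argument: the paper states this corollary without proof, treating it as immediate from \cref{BONoIsolated} together with the standard Brouwer characterization of the Cantor set and the well-known point-set properties of $\LO(G)$ (compact, Hausdorff, totally disconnected, and second countable when $G$ is countable). Your verification of each property, including the nonemptiness of $\BO(G)$ for torsion-free locally nilpotent groups, fills in exactly the routine details the paper omits.
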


\section{\texorpdfstring{The action of $\Comm(G)$ on $\LO(G)$ when $G$ is nilpotent}
{The action of Comm(G) on LO(G) when G is nilpotent}} \label{CommGNilp}

Combining the two parts of the following observation yields \textbf{\cref{CommGActLO}}\pref{CommGActLO-bij}.

\begin{obs} \label{LOG->LOH}
Assume $H$ is a subgroup of a torsion-free group~$G$, and let $\eta \colon \LO(G) \to \LO(H)$ be the natural restriction map.
\noprelistbreak
	\begin{enumerate}
	\item \label{LOG->LOH-inj}
	If $H$ has finite index in~$G$, then $\eta$ is injective. (To see this, let ${\prec} \in \LO(G)$ and note that if $x \in G$, then there is some $n \in \integer^+$, such that $x^n \in H$. We have $x \succ e \iff x^n \succ e$, so the positive cone of~$\prec$ is determined by its restriction to~$H$. Combine this with the fact that any left-invariant order is determined by its positive cone.) 
	\item \label{LOG->LOH-surj}
	If $G$ is locally nilpotent, then $\eta$ is surjective (see \cref{ExtendLONilp}).
	\end{enumerate}
\end{obs}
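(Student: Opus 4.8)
The plan is to dispatch the two parts independently. For part~\pref{LOG->LOH-inj} I would first record the elementary ``root'' property of left-invariant orders: for any left-invariant order~$\prec$ on any group, any element~$x$, and any $n \in \integer^+$, we have $x \succ e \iff x^n \succ e$. This follows from left-invariance by an easy induction---if $x \succ e$, then $x^{k+1} = x \cdot x^k \succ x \cdot e = x \succ e$, and symmetrically $x \prec e \Rightarrow x^n \prec e$ and $x = e \Rightarrow x^n = e$; since these three alternatives are mutually exclusive and exhaustive, $x^n \succ e$ forces $x \succ e$. Granting this, fix ${\prec} \in \LO(G)$ with positive cone $P = \{\, x \in G \mid x \succ e \,\}$. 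Since $[G:H] < \infty$ and $G$ is torsion-free (indeed any left-orderable group is torsion-free), every $x \in G$ has some power $x^n$ (with $n \in \integer^+$) lying in~$H$, and then $x \in P \iff x^n \in P \cap H$. Thus $P$ is recovered from $P \cap H$, i.e.\ from $\eta({\prec})$; as a left-invariant order is determined by its positive cone, $\eta$ is injective.

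For part~\pref{LOG->LOH-surj} there is essentially nothing to prove: the standing hypotheses are exactly that $G$ is torsion-free and locally nilpotent, so \cref{ExtendLONilp} (Rhemtulla) says that every left-invariant order on the subgroup~$H$ is the restriction to~$H$ of some left-invariant order on~$G$. That is precisely the assertion that $\eta$ is surjective. (Here finite index plays no role.)

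I do not expect a genuine obstacle here. The entire content of surjectivity is imported wholesale from \cref{ExtendLONilp}, and injectivity is a direct consequence of the root property combined with finite index. The only thing to keep straight is the bookkeeping of which hypothesis is used where: finite index is what lets us pass from an arbitrary $x \in G$ to a power inside~$H$ for the injectivity half, while local nilpotence (together with torsion-freeness) is what powers the extension theorem for the surjectivity half.
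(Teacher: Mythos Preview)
Your proposal is correct and follows exactly the route the paper takes: for injectivity you use finite index to get a power in~$H$, invoke the root property $x \succ e \iff x^n \succ e$, and recover the positive cone; for surjectivity you simply cite \cref{ExtendLONilp}. The only difference is that you spell out the one-line induction behind the root property, which the paper leaves implicit.
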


In the remainder of this section, we prove \cref{CommGKernel}, which contains \textbf{\cref{NilpCommFaithful}}:
\noprelistbreak
	\begin{quote}
	\it If $G$ is a nonabelian, torsion-free, locally nilpotent group, then the action of\/ $\Comm(G)$ on\/ $\LO(G)$ is faithful.
	\end{quote}

\begin{notation}
Assume $G$ is a torsion-free, abelian group. 
\noprelistbreak
	\begin{itemize}
	\item For $n \in \integer$, we let $G^n = \{\, g^n \mid g \in G \,\}$. This is a subgroup of~$G$ (since $G$ is abelian).
	\item For $p/q \in \rational$, such that $G^p$ and~$G^q$ have finite index in~$G$, we define $\tau^{p/q} \in \Comm(G)$ by $\tau^{p/q}(g^q) = g^p$ for $g \in G$.
	\end{itemize}
\end{notation}

\begin{prop} \label{CommGKernel}
Let $G$ be a torsion-free, locally nilpotent group.
	\begin{enumerate}
	\item If $G$ is not abelian, then the action of\/ $\Comm(G)$ on\/ $\LO(G)$ is faithful.
	\item If $G$ is abelian, then the kernel of the action is
	$$ \bigset{ \tau^{p/q} }{ \begin{matrix} 
	\text{$p,q \in \integer^+$, such that} \\ 
	\text{$G^p$ and~$G^q$ have finite index in~$G$}
	\end{matrix} } 
	.$$
	\end{enumerate}
\end{prop}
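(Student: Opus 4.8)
The plan is to prove the abelian statement~(2) first and then bootstrap to the nonabelian statement~(1) by passing to the radicable hull.

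\textbf{Part (2) (abelian case).} Set $V = G \otimes \rational$. If $H \le G$ has finite index then $H \otimes \rational = V$, so every commensuration $\phi$ of $G$ induces a $\rational$-linear automorphism $\hat\phi$ of $V$; moreover restriction is a bijection between the left-invariant orders of $V$ and of $G$, and one checks (comparing positive cones, using that each element of $G$ has a power in $G_1$) that $\alpha = [\phi]$ lies in the kernel of the action on $\LO(G)$ exactly when $\hat\phi$ fixes (setwise) the positive cone of every order on $V$. I would then show that a $\rational$-linear automorphism with that property is multiplication by a positive rational: if $\hat\phi(v) \notin \rational v$ for some $v \neq e$, write $V = \rational v \oplus \rational\hat\phi(v) \oplus V'$, choose any order on $V'$, and take the lexicographic order that is positive in the $v$-direction, negative in the $\hat\phi(v)$-direction, then $V'$; this has $v \succ 0$ but $\hat\phi(v) \prec 0$, a contradiction. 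Hence $\hat\phi(v) = \mu(v)v$ with $\mu(v) \in \rational^\times$, additivity forces $\mu(\cdot)$ to be a constant $\mu$, and a negative $\mu$ would reverse every cone, so $\mu = p/q \in \rational^{>0}$, say in lowest terms. A short computation with $\gcd(p,q)=1$ (using $pG \cap qG = pqG$) gives $pG_1 \subseteq pqG$, hence $G_1 \subseteq G^q$, so $G^q$ and, symmetrically, $G^p$ have finite index, $\tau^{p/q}$ is defined, and $\phi$ agrees with $\tau^{p/q}$ on $G_1$; thus $\alpha = \tau^{p/q}$. Conversely $\tau^{p/q}$ induces multiplication by $p/q > 0$ on $V$, which fixes every cone, so it is in the kernel.

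\textbf{Part (1), reduction.} Let $\hat G$ be the radicable hull of $G$: it is radicable, torsion-free and locally nilpotent, and every element of $\hat G$ has a power in $G$, hence in any prescribed finite-index subgroup. A commensuration $\phi\colon G_1 \to G_2$ therefore extends uniquely to $\hat\phi \in \Aut(\hat G)$, with $\hat\phi = \mathrm{id}$ iff $\alpha = e$. By \cref{ExtendLONilp} restriction is a bijection $\LO(\hat G) \to \LO(G)$, and $\alpha$ acts trivially on $\LO(G)$ iff $\hat\phi$ fixes the positive cone of every ${\prec}\in\LO(\hat G)$. So it suffices to prove: if $\hat\phi$ fixes every positive cone, then $\hat\phi = \mathrm{id}$. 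I split into two cases, according to whether or not there exists $g \in \hat G$ such that $\hat\phi(g)$ is \emph{not} a positive rational power of $g$ (i.e.\ $\hat\phi(g)^m \neq g^n$ for all $m,n \in \integer^+$).

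\textbf{Part (1), Case A (separation).} The key lemma here is: \emph{if $u,v$ lie in a torsion-free locally nilpotent group $K$, with $u \neq e$ and $u^m \neq v^n$ for all $m,n\in\integer^+$, then some left-invariant order on $K$ has $u \succ e$ and $v \prec e$.} Given such $g$ (the easy sub-case $\hat\phi(g) = g^\mu$ with $\mu < 0$ is settled directly inside $\isol{g}$), apply the lemma with $u=g$, $v=\hat\phi(g)$, and extend the order to $\hat G$ by \cref{ExtendLONilp}; this contradicts that $\hat\phi$ fixes every cone, so Case~A cannot occur. To prove the lemma, by \cref{ExtendLONilp} it is enough to build the order on $L = \isol{u,v}$, which is nilpotent of finite class, and I induct on that class: for $L$ abelian one uses a lexicographic order with $u$ positive and $v$ negative; in general, comparing $u$ and $v$ with the (isolated, central) subgroup $Z = Z(L)$ and passing either to $L/Z$ or to $\isol{v}\le Z$, everything reduces via lexicographic combinations and the inductive hypothesis to the single situation $v^m = u^k z$ with $k \neq 0$ and $e \neq z \in Z$, where $\isol{u,z}$ is abelian of rank~$2$ and an Archimedean order on it with irrational slope can be chosen with $u \succ e$ but $u^k z \prec e$. (It is essential that we allow left-invariant orders with non-central convex jumps: bi-invariant orders will not suffice, as the Heisenberg automorphism $a \mapsto a[a,b]$, $b\mapsto b$ shows.)

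\textbf{Part (1), Case B, and the obstacle.} Now $\hat\phi(g) = g^{\mu(g)}$ with $\mu(g)\in\rational^{>0}$ for every $g$. If $g,h$ commute then $S = \isol{g,h}$ is abelian and $\hat\phi$-invariant, and $\hat\phi|_S$ fixes every cone of $S$, so by part~(2) it is a positive scalar; thus $\mu(g)=\mu(h)$. For arbitrary $g,h$, a nontrivial element of $Z(\langle g,h\rangle)$ commutes with both, so $\mu$ is a constant $\mu$ and $\hat\phi(g) = g^\mu$ for all $g$. Finally, since $G$ is nonabelian, choose $a,b$ with $[a,b]\neq e$; inside $M = \isol{a,b}$ (nilpotent of class $d \ge 2$, $\hat\phi$-invariant) pick $w = [u,x] \neq e$ with $u \in \gamma_{d-1}(M)$, $x \in M$. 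Then $w$ is central in $M$, so by bilinearity of the commutator on $\gamma_{d-1}/\gamma_d \times M/\gamma_2$ (extended to rational exponents using radicability) $\hat\phi(w) = [u^\mu, x^\mu] = w^{\mu^2}$, while also $\hat\phi(w) = w^\mu$; as $w$ has infinite order, $\mu = \mu^2$, so $\mu = 1$ and $\hat\phi = \mathrm{id}$, as required. I expect the main obstacle to be the separation lemma of Case~A — specifically, realizing that one must leave the comfortable world of bi-invariant orders (and of orders built from a central series), and organizing the induction on nilpotence class so that every configuration collapses to the easy ``separate $u$ from $u^k z$ with $z$ central'' case, which is where \cref{ExtendLONilp} together with irrational-slope Archimedean orders on rank-$2$ abelian subgroups does the real work.
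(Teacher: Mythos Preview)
Your argument is correct; Part~(2) and Case~B of Part~(1) match the paper's proof closely (eigenvectors on $G\otimes\rational$, then a commutator identity forcing $\mu=\mu^2$). The substantive difference is your Case~A. You prove a separation lemma by induction on the nilpotence class of $\isol{u,v}$, ending with an irrational-slope order on a rank-$2$ abelian subgroup. The paper instead dispatches this case in a few lines by passing to the abelianization: with $H=\langle g,g^\alpha\rangle$ and $\overline{H}=H/\isol{[H,H]}_H$, \cref{Rank(Abelianization)} guarantees that $\overline{g^\alpha}\notin\isol{\overline{g}}$ in the \emph{abelian} group~$\overline{H}$, so a simple lexicographic order there (extended via \cref{ExtendLONilp}) places $g$ and $g^\alpha$ in distinct convex jumps of~$G$, and reversing one of those jumps already produces an order not fixed by~$\alpha$. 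This sidesteps your induction on class, the irrational-slope construction, and the radicable-hull detour entirely (the paper works directly inside $\dom\alpha$). Your route does yield the sharper conclusion that one can arrange $g\succ e$ and $g^\alpha\prec e$ simultaneously, but the paper's abelianization trick via \cref{Rank(Abelianization)} is the shortcut that collapses the ``hard'' case to the abelian one in one step.
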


\begin{proof}[Proof \rm (cf.\ proof of {\cite[Thm.~4.1]{Koberda-Faithful}})]
For $p,q \in \integer^+$, $g \in G$, and ${\prec} \in \LO(G)$, we have $g^p \succ e \iff g^q \succ e$. Therefore,  $\tau^{p/q}$ acts trivially on $\LO(G)$ if it exists.
To complete the proof, we wish to show that the kernel is trivial if $G$ is not abelian, and that every element of  the kernel is of the form $\tau^{p/q}$ if $G$ is abelian.

Let $\alpha$ be an element of the kernel. We consider three cases.

\setcounter{case}{0}

\begin{case}
Assume there exists $g \in \dom \alpha$, such that $g^\alpha \notin \isol{g}$.
\end{case}
Let $H = \langle g, g^\alpha \rangle$ and $\overline{H} = H/\isol{[H,H]}_H$. There is a left-invariant order~$\ll$ on the abelian group~$\overline{H}$, such that $\isol{\overline{g}}$ is a convex subgroup. Then $\isol{\overline{g}}/ \isol{\overline{e}}$ is a convex jump. Since $\overline{g^\alpha} \notin \isol{\overline{g}}$ \csee{Rank(Abelianization)}, this implies that  $g$ and~$g^\alpha$ determine different convex jumps of~$\ll$. By applying \cref{ExtendLONilp}, we see that there is a left-invariant order~$\prec$ on~$G$, such that $g$ and~$g^{\alpha}$ determine two different convex jumps. Reversing the order on the convex jump containing~$g^\alpha$ yields a second left-invariant order, and it is impossible for $\alpha$ to fix both of these orders. This contradicts the fact that $\alpha$ is in the kernel of the action.

\begin{case} \label{NilpCommFaithfulPf-Abel}
Assume $G$ is abelian, and $g^\alpha \in \isol{g}$, for all $g \in \dom\alpha$.
\end{case}
The assumption means that every element of $\dom \alpha$ is an eigenvector for the action of $\alpha$ on the vector space $G \otimes \rational$. Since $\dom \alpha$ is a subgroup, we know that it is closed under addition as a subset of $G \otimes \rational$. This implies that all of $\dom \alpha$ is in a single eigenspace. Then, since $\dom \alpha$ has finite index, we conclude that $G \otimes \rational$ is a single eigenspace, so there is some $p/q \in \rational$, such that we have $\alpha(v) = (p/q)v$ for all $v \in G \otimes \rational$. In other words, $\alpha(g) = \tau^{p/q}(g)$ for all $g \in G$.

We must have $p/q \in \rational^+$, since $\tau^{p/q} = \alpha$ acts trivially on $\LO(G)$, and $g \succ e \implies g^{-1} \not\succ e$. Also, since $\tau^{p/q} = \alpha \in \Comm(G)$, we know that the domain~$G^q$ and range~$G^p$ of $\tau^{p/q}$ have finite index in~$G$ (assuming, without loss of generality, that $p/q$ is in lowest terms).

\begin{case}
Assume $G$ is nonabelian, and $g^\alpha \in \isol{g}$, for all $g \in \dom \alpha$.
\end{case}
For each nontrivial $g \in \dom \alpha$, the assumption tells us there exists $r(g) \in \rational$, such that $\alpha(g) = g^{r(g)}$. 
The eigenvector argument of the preceding case 
shows that $r(g) = r(h)$ whenever $g$ commutes with~$h$. 
However, for all $g, h \in G$, there is some nontrivial $z \in G$ that commutes with both $g$ and~$h$ (since $G$ is locally nilpotent), so $r(g) = r(z) = r(h)$. Therefore $r(g) = r$ is independent of~$g$.

On the other hand, since $G$ is locally nilpotent, but not abelian, we may choose $g,h \in \dom \alpha$, such that $\langle g,h \rangle$ is nilpotent of class~$2$. This means that $[g,h]$ is a nontrivial element of the center of $\langle g,h \rangle$. Then
	$$ [g,h]^r = [g,h]^\alpha = [g^\alpha, h^\alpha] = [g^r, h^r] = [g,h]^{r^2} ,$$
so $r = r^2$. Hence $r = 1$, so $\alpha(g) = g^r = g^1 = g$ for all $g \in G$.
\end{proof}

\begin{cor} \label{WhenAbelFaithful}
Assume $G$ is a torsion-free, locally nilpotent group. Then the action of\/ $\Comm(G)$ on\/ $\LO(G)$ is faithful iff either
	\begin{enumerate}
	\item $G$ is not abelian,
	or
	\item $G^n$ has infinite index in~$G$ for all $n \ge 2$,
	or
	\item $G = \{e\}$ is trivial.
	\end{enumerate}
\end{cor}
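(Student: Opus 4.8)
The plan is to read the corollary off from \cref{CommGKernel}, which already computes the kernel of the action on $\LO(G)$; what remains is to translate that computation into the stated trichotomy and to dispose of two degenerate cases. If $G$ is nonabelian, then \cref{CommGKernel} already asserts that the action is faithful, and condition~(1) holds, so there is nothing to prove. If $G = \{e\}$, then $\Comm(G) = \{e\}$, so the action is faithful, and condition~(3) holds. Hence the real content is the case in which $G$ is abelian and nontrivial; there conditions~(1) and~(3) both fail, and I must show the action is faithful if and only if $G^n$ has infinite index in~$G$ for every $n \ge 2$.

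For that case I would first record an arithmetic fact: for $p,q \in \integer^+$ such that $G^p$ and~$G^q$ have finite index (so that $\tau^{p/q}$ is defined), the element $\tau^{p/q}$ is the identity of $\Comm(G)$ precisely when $p = q$. Indeed, two commensurations represent the same element of $\Comm(G)$ iff they have a common restriction to some finite-index subgroup~$H$; and since $G$ is torsion-free while $H$, being of finite index in the infinite group~$G$, is nontrivial, the map $g^q \mapsto g^p$ can agree with the identity on~$H$ only when $p = q$. Combining this with \cref{CommGKernel}, the kernel of the action is trivial iff there is no pair $p \neq q$ in $\integer^+$ for which $G^p$ and~$G^q$ both have finite index. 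Since $G^{kn} \subseteq G^n$, I can pass to coprime pairs, and a coprime pair $(p,q) \neq (1,1)$ with $G^p, G^q$ of finite index exists iff $G^n$ has finite index for some $n \ge 2$ (take $n$ to be whichever of $p,q$ exceeds~$1$ in one direction, and $(p,q) = (n,1)$ in the other). Therefore the action is faithful iff $G^n$ has infinite index for all $n \ge 2$, which is condition~(2), completing this case and hence the proof.

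I do not foresee a genuine obstacle here: the substance is entirely contained in \cref{CommGKernel}. The only points that need a little care are the verification that $\tau^{p/q} = e$ in $\Comm(G)$ exactly when $p = q$ (which uses torsion-freeness and nontriviality of~$G$), the harmless reduction to coprime exponents, and remembering to split off the trivial group as a separate case, since there neither~(1) nor~(2) holds even though the action is faithful.
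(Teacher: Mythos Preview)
Your argument is correct and is exactly what the paper intends: the corollary is stated without proof, as an immediate consequence of \cref{CommGKernel}, and you have simply spelled out the (straightforward) translation from the kernel description there to the trichotomy. The detour through coprime pairs is unnecessary (from any pair $p\neq q$ with $G^p,G^q$ of finite index you can take $n=\max(p,q)\ge 2$ directly), but it does no harm.
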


\begin{rem}
The proof of \cite[Thm.~4.1]{Koberda-Faithful} assumes that $G$ is finitely generated, but this was omitted from the statement of the result. (The group~$\rational$ has infinitely many automorphisms~$\tau^{p/q}$, but only two left-invariant orders, so it provides a counterexample to the theorem as stated.)
\end{rem}

\section{The space of virtual left-orders} \label{VirtLOSect}

\begin{defn}
Note that if $H_1$ is a subgroup of a group~$H_2$, then we have a natural restriction map $\LO(H_2) \to \LO(H_1)$. Therefore, we may define the direct limit
	$$ \VLO(G) = \lim_{\longrightarrow} \LO(H) ,$$
where the limit is over all finite-index subgroups~$H$ of~$G$. An element of $\VLO(G)$ can be called a \emph{virtual left-invariant order} on~$G$.
\end{defn}

\begin{rems} \ 
\noprelistbreak
	\begin{enumerate}
	\item There is a natural action of $\Comm(G)$ on $\VLO(G)$.
	\item \fullcref{LOG->LOH}{inj} tells us that the inclusion $\LO(G) \hookrightarrow \VLO(G)$ is injective, so we can think of $\LO(G)$ as a subset of $\VLO(G)$.
	\end{enumerate}
\end{rems}

We now have the notation to prove \textbf{\cref{CommMovesLOG}}:
	\begin{quote} \it
	If $G$ is a nonabelian group that is residually locally torsion-free nilpotent, and $\alpha$ is a nonidentity element of\/ $\Comm(G)$, then ${\prec}^\alpha \neq {\prec}$, for some ${\prec} \in \LO(G)$.
	\end{quote}

\begin{proof}
Suppose $\alpha$ fixes every element of $\LO(G)$. Since reversing a left-invariant order on any convex jump yields another left-invariant order, it is clear that $\alpha$ must fix every convex jump of every left-invariant order. More precisely, 
	\begin{align} \label{alphaFixRelConvex}
	\begin{matrix}
	\text{if $C$ is any convex subgroup of~$G$ (with respect to some}
	\\ \text{left-invariant order), then $\alpha^{-1}(C) = C \cap \dom \alpha$.}
	\end{matrix}
	\end{align}

Choose $g \in \dom \alpha$, such that $g^\alpha \neq g$. Then we may choose a nonabelian, torsion-free, locally nilpotent quotient $G/N$ of~$G$, such that $g^\alpha \notin g N$. Since $G/N$ is torsion-free and locally nilpotent, we know that it has a left-invariant order. We can extend this to a left-invariant order on~$G$  (by choosing any left-invariant order on the subgroup~$N$). Then $N$ is a convex subgroup for this order, so, from \pref{alphaFixRelConvex}, we know that $\alpha$ induces a well-defined $\overline{\alpha} \in \Comm(G/N)$. Then \cref{NilpCommFaithful} tells us there exists ${\ll} \in \LO(G/N)$, such that ${\ll}^\alpha \neq {\ll}$. Extend $\ll$ to a left-invariant order~$\prec$ on~$G$ (by choosing any left-invariant order on the subgroup~$N$). Then ${\prec}^\alpha \neq \prec$.
\end{proof}

\section{Non-faithful actions on the space of bi-invariant orders} \label{NonFaithfulSect}

In this section, we provide examples of torsion-free, nilpotent groups for which there is a nontrivial commensuration that acts trivially on the space of bi-invariant orders.

\begin{eg}
For $r \in \integer^+$, let 
	$$G_r = \langle\, x,y,z \mid [x,y] = z^r, [x,z] = [y,z] = e \,\rangle .$$ 
(Then $G_1$ is the discrete Heisenberg group, and $G_r$ is a finite-index subgroup of it.)
Since $\langle z \rangle = Z(G)$, it is easy to see that $z \in \isol{N}$, for every nontrivial, normal subgroup~$N$ of~$G$. Hence, if we define an automorphism $\alpha \colon G_r \to G_r$ by 
	$$ \text{$\alpha(x) = xz$, \ $\alpha(y) = y$, \ and \ $\alpha(z) = z$,} $$
then $\alpha$ acts trivially on $\BO(G_r)$. However, $\alpha$ is outer if $r > 1$ (since $z \notin \langle z^r \rangle = [G_r,G_r]$). Thus, $\Out(G_r)$ does not act faithfully on $\BO(G_r)$ when $r > 1$. 

On the other hand, it is easy to see that $\Out(G_1)$ does act faithfully on $\BO(G_1)$. This means that deciding whether $\Out(G)$ acts faithfully is a rather delicate question --- the answer can be different for two groups that are commensurable to each other.
\end{eg}

Here is an example where we get the same answer for all torsion-free, nilpotent groups that are commensurable:

\begin{eg} \label{OutNotFaithfulGoodEg}
Let $G = \integer \ltimes \integer^3$, where $\integer$ acts on $\integer^3$ via the matrix
	{\smaller[4]$\begin{bmatrix} 1 & 0 & 0 \\ 1 & 1 & 0 \\ 0 & 1 & 1 \end{bmatrix}$}.
In other words, 
	$$ G = 
	\bigl\langle x, y,z,w \mid [x,w] = y, \ [y,w] = z, \ \text{other commutators trivial} \bigr\rangle .$$
Choose $r \in \integer \smallsetminus \{0\}$, and define $\alpha \in \Aut(G)$ by 
	$$ \text{$x^\alpha =  x z^r$, \ $y^\alpha = y$, \ $z^\alpha = z$, \ $w^\alpha = w$.} $$
We claim $\alpha$ is an outer automorphism of~$G$ that acts trivially on $\BO(G)$.
\end{eg}

\begin{proof}
Note that $[x, hw^n] = [x,w^n] \in y^n \langle z \rangle$ for all $h \in \langle x,y,z\rangle$ and all $n \in \integer$. This implies that if $g \in G$, and $[x,g] \neq e$, then $[x,g] \notin \langle z \rangle$. Since $x^\alpha \in x \, \langle z \rangle$, we conclude that $\alpha$ is outer.

Let ${\prec} \in \BO(G)$, and let $C$ be the minimal nontrivial convex subgroup of~$G$. From \cref{CentralJumps}, we know $C$ is a subgroup of $Z(G)$. Since $Z(G) = \langle z \rangle$ has rank one, we conclude that $C = \langle z \rangle$ is the (unique) minimal nontrivial convex subgroup. Since $\alpha$ centralizes both $\langle z \rangle$ and $G/ \langle z \rangle$, this implies that $\alpha$ centralizes every convex jump $C_2/C_1$. Therefore ${\prec}^\alpha = \alpha$. Since $\prec$ is an arbitrary bi-invariant order, we conclude that $\alpha$ acts trivially on $\BO(G)$.
\end{proof}

\section{Nilpotent Lie groups and left-invariant orders} \label{LieGrpLO}

It is easy to see that if $\prec$ is a left-order on the abelian group $G = \integer^n$, then there is a nontrivial linear function $\varphi \colon \real^n \to \real$, such that, for all $x,y \in \integer^n$, we have
	$$ \varphi(x) < \varphi(y) \implies x \prec y .$$
We will generalize this observation in a natural way to any finitely generated, nilpotent group~$G$, by choosing an appropriate embedding of~$G$ in a connected Lie group \csee{GisLatt,order->homo}.

\subsection{Preliminaries on discrete subgroups of nilpotent Lie groups}

\begin{defn}
A topological space is \emph{1-connected} if it is connected and simply connected.
\end{defn}

\begin{prop}[{}{\cite[Thm.~2.18, p.~40, and Cor.~2, p.~34]{RaghunathanBook}}] \label{GisLatt}
Every finitely generated, torsion-free, nilpotent group is isomorphic to a discrete, cocompact subgroup of a 1-connected, nilpotent Lie group~$\GG$. Furthermore, $\GG$ is unique up to isomorphism.
\end{prop}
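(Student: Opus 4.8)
The plan is to construct $\GG$ as the \emph{real Malcev completion} of the given group~$\Gamma$, following the classical argument of Malcev. The first step is to rationalize: one produces a torsion-free, divisible (radicable) nilpotent group $\hat\Gamma \supseteq \Gamma$ in which $\Gamma$ sits as an isolated subgroup — every element of $\hat\Gamma$ has a positive power lying in~$\Gamma$ — and in which $m^{\text{th}}$ roots are unique, for all $m \in \integer^+$. Existence and uniqueness of such a~$\hat\Gamma$ is a standard fact for finitely generated torsion-free nilpotent groups (informally $\hat\Gamma = \Gamma \otimes \rational$); the essential gain is that $\hat\Gamma$ carries a canonical $\rational$-structure, so $\gamma^{p/q}$ makes unambiguous sense for $\gamma \in \hat\Gamma$ and $p/q \in \rational$.

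Next I would pass from $\hat\Gamma$ to a Lie algebra. Since $\hat\Gamma$ is nilpotent and uniquely divisible, the Baker--Campbell--Hausdorff series and its formal inverse are given by \emph{finite} sums on~$\hat\Gamma$, and this equips the underlying set of $\hat\Gamma$ with the structure of a nilpotent Lie algebra $\mathfrak{g}_{\rational}$ over~$\rational$ with $\dim_{\rational}\mathfrak{g}_{\rational} = \rank\Gamma =: n$, in such a way that $\hat\Gamma$ is recovered as the set of $\rational$-points of $\mathfrak{g}_{\rational}$ under BCH multiplication. (This is the Malcev correspondence between divisible nilpotent groups and rational nilpotent Lie algebras; concretely, one selects a \emph{Malcev basis} $x_1,\dots,x_n$ of~$\Gamma$ adapted to a central series with infinite-cyclic quotients and verifies that multiplication, inversion, and the power map $(\gamma,t)\mapsto\gamma^t$ are polynomial in the resulting coordinates.) Now set $\mathfrak{g} = \mathfrak{g}_{\rational}\otimes_{\rational}\real$.

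Let $\GG$ be the $1$-connected Lie group with Lie algebra~$\mathfrak{g}$; since $\mathfrak{g}$ is nilpotent one may take $\GG = (\mathfrak{g}, \ast)$ with $\ast$ the polynomial BCH product, so $\exp$ is a diffeomorphism and $\GG$ is diffeomorphic to~$\real^n$, hence $1$-connected and nilpotent. The inclusion $\mathfrak{g}_{\rational}\hookrightarrow\mathfrak{g}$ embeds $\hat\Gamma$, and in particular~$\Gamma$, as a subgroup of~$\GG$. Using the Malcev basis, coordinates of the second kind $(t_1,\dots,t_n)\mapsto x_1^{t_1}\cdots x_n^{t_n}$ give a diffeomorphism $\real^n\to\GG$ carrying $\integer^n$ bijectively onto~$\Gamma$ (by uniqueness of the normal form), so $\Gamma$ is discrete and $\GG/\Gamma$ is the continuous image of the cube $[0,1]^n$, hence compact. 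For uniqueness: if $\Gamma$ is also a lattice in a $1$-connected nilpotent Lie group~$\GG'$, then $\GG'$ is again uniquely divisible, so $\Gamma\hookrightarrow\GG'$ extends uniquely to $\hat\Gamma\hookrightarrow\GG'$; the $\real$-span of the logarithms of $\hat\Gamma$ is a Lie subalgebra of $\mathrm{Lie}(\GG')$ (closed under bracket by BCH) containing $\log\Gamma$, of dimension at least $\rank\hat\Gamma=\rank\Gamma=\dim\GG'$, hence all of $\mathrm{Lie}(\GG')$; this span inherits from the group structure of $\hat\Gamma$ a $\rational$-form canonically isomorphic to $\mathfrak{g}_{\rational}$, so $\mathrm{Lie}(\GG')\cong\mathfrak{g}_{\rational}\otimes\real\cong\mathfrak{g}$ and therefore $\GG'\cong\GG$, compatibly with the embeddings of~$\Gamma$.

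The technical heart is the middle step: showing that the group law of~$\hat\Gamma$ is polynomial with rational coefficients, so that it extends from $\rational^n$ to $\real^n$ and still defines a group — associativity and the remaining group axioms then following because they are polynomial identities valid on the Zariski-dense subset $\rational^n$ of~$\real^n$. All the real work — nilpotency bounding the BCH series to a finite sum, well-definedness of roots, the normal-form calculations — is concentrated here; this is precisely the Malcev--BCH machinery. A variant that trades this for a different difficulty: embed $\Gamma$ in the integer unitriangular group $U_m(\integer)$ and let $\GG$ be the real points of the $\rational$-Zariski closure of~$\Gamma$ inside $U_m(\real)$ — discreteness is then immediate, but one must work to show $\Gamma$ is cocompact, typically by inducting on $\dim\GG$ along a central series.
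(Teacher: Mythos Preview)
The paper does not prove this proposition at all: it is stated with a citation to Raghunathan's book (Theorem~2.18 and Corollary~2 of Chapter~2) and used as a black box. So there is no ``paper's own proof'' to compare against; the result is quoted as background.

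That said, your sketch is a correct outline of the classical Malcev construction, which is indeed what underlies the cited reference. The steps --- rational completion $\hat\Gamma$, the Malcev/BCH correspondence giving a $\rational$-Lie algebra, tensoring up to~$\real$, and reading off discreteness and cocompactness from Malcev coordinates --- are the standard ones, and your uniqueness argument via unique divisibility of $1$-connected nilpotent Lie groups is also the usual route. Raghunathan's presentation is organized slightly differently (he proves the extension/rigidity statement you use for uniqueness as a separate theorem, which is exactly what the paper cites next as \cref{MalcevSuperrig}), but the mathematical content is the same. Your alternative via the Zariski closure in $U_m(\real)$ is also a legitimate path and is closer in spirit to how some texts handle it; as you note, the work then shifts to cocompactness.
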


\begin{rem}[{}{\cite[Thm.~2.10, p.~32]{RaghunathanBook}}] \label{discrete->fg}
Conversely, every discrete subgroup of a 1-connected, nilpotent Lie group\/~$\GG$ is finitely generated.
\end{rem}

\begin{prop}[{}{\cite[Thm.~2.11, p.~33]{RaghunathanBook}}] \label{MalcevSuperrig}
Suppose
	\begin{itemize}
	\item $\GG_1$ and~$\GG_2$ are $1$-connected, nilpotent Lie groups,
	\item $G$ is a discrete, cocompact subgroup of\/~$\GG_1$,
	and
	\item $\rho \colon G \to \GG_2$ is a homomorphism.
	\end{itemize}
Then $\rho$ extends\/ {\upshape(}uniquely\/{\upshape)} to a continuous homomorphism $\widehat\rho \colon \GG_1 \to \GG_2$.
\end{prop}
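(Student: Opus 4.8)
The plan is to reduce everything to a statement about polynomials. By the structure theory of lattices in nilpotent Lie groups \cite{RaghunathanBook}, there is a basis $X_1, \dots, X_n$ of the Lie algebra of~$\GG_1$ that is \emph{adapted} to~$G$: the ``coordinates of the second kind'' map
	$$ \real^n \to \GG_1 \colon (t_1, \dots, t_n) \mapsto \exp(t_1 X_1) \cdots \exp(t_n X_n) $$
is a diffeomorphism carrying $\integer^n$ onto~$G$, and, in the resulting coordinates, multiplication on~$\GG_1$ becomes a polynomial map $\real^n \times \real^n \to \real^n$. In particular each $\exp X_i$ lies in~$G$. Fix coordinates of the second kind on~$\GG_2$ in the same way. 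Since $\GG_2$ is $1$-connected and nilpotent, its exponential map is a diffeomorphism, so every $h \in \GG_2$ lies on a unique one-parameter subgroup $t \mapsto h^t$, which is a polynomial function of~$t$ in these coordinates (the exponential map is polynomial in coordinates of the second kind, and $t \mapsto t \log h$ is linear).

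I would then define the extension explicitly. In coordinates of the second kind, set
	$$ \widehat\rho \bigl( \exp(t_1 X_1) \cdots \exp(t_n X_n) \bigr) = \rho(\exp X_1)^{t_1} \cdots \rho(\exp X_n)^{t_n} . $$
This is well defined (the coordinates of a point of~$\GG_1$ are unique), and by the previous paragraph the right-hand side is a product, under the polynomial multiplication of~$\GG_2$, of functions polynomial in the~$t_i$; hence $\widehat\rho \colon \real^n \to \real^m$ is a polynomial map. When all $t_i \in \integer$ we have $\rho(\exp X_i)^{t_i} = \rho\bigl((\exp X_i)^{t_i}\bigr) = \rho\bigl(\exp(t_i X_i)\bigr)$ (using that $\rho$ is a homomorphism and $\exp X_i \in G$), so $\widehat\rho$ restricts to~$\rho$ on~$G$.

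The crux is that $\widehat\rho$ is a homomorphism; here I would use polynomial rigidity, since $G$ is \emph{not} dense in~$\GG_1$. Writing the multiplications of $\GG_1$ and~$\GG_2$ in coordinates as polynomial maps $\mu_1$ and $\mu_2$, consider
	$$ F(x,y) = \widehat\rho\bigl(\mu_1(x,y)\bigr) - \mu_2\bigl(\widehat\rho(x),\widehat\rho(y)\bigr) \in \real^m \qquad (x,y \in \real^n) , $$
a polynomial map $\real^n \times \real^n \to \real^m$. Since $\widehat\rho$ agrees on $G = \integer^n$ with the homomorphism~$\rho$, each component of~$F$ is a real polynomial that vanishes on $\integer^{2n}$; an easy induction on the number of variables (a nonzero one-variable polynomial has finitely many roots) shows such a polynomial is identically zero. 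Thus $\widehat\rho$ is a continuous homomorphism extending~$\rho$. For uniqueness: the vectors $\log g$ ($g \in G$) span the Lie algebra of~$\GG_1$ (indeed $X_i = \log(\exp X_i)$ with $\exp X_i \in G$, and $\{X_i\}$ is a basis), so the one-parameter subgroups $t \mapsto \exp(t \log g)$, $g \in G$, generate~$\GG_1$; any continuous homomorphism $\GG_1 \to \GG_2$ agreeing with~$\rho$ on~$G$ restricts on each of these to $t \mapsto \rho(g)^t$, so it is determined on a generating set and must equal~$\widehat\rho$.

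The main obstacle is really the structural input of the first paragraph — the existence of an adapted Malcev basis and the polynomiality of the group laws, exponential maps, and one-parameter subgroups in coordinates of the second kind — which I would simply quote from the standard theory \cite{RaghunathanBook}. After that the argument is essentially formal: the polynomial-vanishing trick does the work that a density argument cannot. (A coordinate-free alternative is to induct on the nilpotence class of~$\GG_1$, extending $\rho$ one central one-parameter subgroup at a time using the Baker--Campbell--Hausdorff formula, but the bookkeeping is heavier and I would not take that route.)
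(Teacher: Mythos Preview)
The paper does not supply a proof of this proposition; it is simply quoted from Raghunathan's book as background. Your argument is correct and is essentially the standard Malcev-basis proof one finds in that reference: choose coordinates of the second kind adapted to the lattice so that $G$ becomes $\integer^n$ and the group law becomes polynomial, define the extension by the obvious product of one-parameter subgroups in~$\GG_2$, and then use the fact that a real polynomial vanishing on $\integer^{2n}$ vanishes identically (Zariski density of the lattice) to promote the homomorphism identity from~$G$ to all of~$\GG_1$. The uniqueness argument via the span of $\{\log g : g \in G\}$ is also the standard one.
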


\begin{defn}[{}{\cite[Defn.~3.1]{Witte-SyndHulls}}]
Let $G$ be a discrete subgroup of a Lie group~$\GG$. A closed, connected subgroup~$\HH$ of~$\GG$ is a \emph{syndetic hull} of~$G$ if 
	 $G \subseteq \HH$
	and
	 $\HH/G$ is compact.
\end{defn}

\begin{prop}[{}cf.\ {\cite[Prop.~2.5, p.~31]{RaghunathanBook}}]
If\/ $\GG$ is a 1-connected, nilpotent Lie group, then every discrete subgroup of\/~$\GG$ has a unique syndetic hull.
\end{prop}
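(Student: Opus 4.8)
The plan is to extract both halves of the statement from Malcev's rigidity theorem (\cref{MalcevSuperrig}). For existence, first observe that a discrete subgroup~$G$ of~$\GG$ is finitely generated (\cref{discrete->fg}) and torsion-free (the exponential map of the $1$-connected nilpotent group~$\GG$ is a diffeomorphism, so $\GG$, and hence~$G$, has no nontrivial torsion). Therefore \cref{GisLatt} presents~$G$ as a discrete, cocompact subgroup of some $1$-connected, nilpotent Lie group~$\GG_0$. I would then apply \cref{MalcevSuperrig} to the inclusion $\rho \colon G \hookrightarrow \GG$ to get a continuous homomorphism $\widehat\rho \colon \GG_0 \to \GG$ that restricts to the inclusion on~$G$, and set $\HH = \widehat\rho(\GG_0)$. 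This~$\HH$ is connected and contains $G = \widehat\rho(G)$; choosing a compact set $K_0 \subseteq \GG_0$ with $GK_0 = \GG_0$ (possible since $\GG_0$ is locally compact and $\GG_0/G$ is compact) gives $\HH = \widehat\rho(G)\,\widehat\rho(K_0) = GK$ with $K = \widehat\rho(K_0)$ compact in~$\GG$. Since $G$ is closed in~$\GG$ and $K$ is compact, $\HH = GK$ is closed in~$\GG$, and $\HH/G$, being the image of the compact set~$K$ under the quotient map $\HH \to \HH/G$, is compact. So $\HH$ is a syndetic hull of~$G$.

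For uniqueness, suppose $\HH_1$ and~$\HH_2$ are both syndetic hulls of~$G$. A closed connected subgroup of a $1$-connected nilpotent Lie group is itself $1$-connected nilpotent (a standard structural fact; see \cite{RaghunathanBook}), and $G$ is discrete and cocompact in each~$\HH_i$. Applying \cref{MalcevSuperrig} with ambient group~$\HH_1$ (and cocompact subgroup~$G$) to the inclusion $G \hookrightarrow \HH_2$ produces a continuous homomorphism $f \colon \HH_1 \to \HH_2$ extending it. Composing $f$ with the inclusion $\HH_2 \hookrightarrow \GG$ yields a continuous homomorphism $\HH_1 \to \GG$ extending the inclusion $G \hookrightarrow \GG$; so does the inclusion $\HH_1 \hookrightarrow \GG$, and the uniqueness clause of \cref{MalcevSuperrig} (applied with ambient group~$\HH_1$ and target~$\GG$) forces the two to agree. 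Hence $h = f(h) \in \HH_2$ for every $h \in \HH_1$, so $\HH_1 \subseteq \HH_2$; by symmetry $\HH_1 = \HH_2$.

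All the verifications — well-definedness and continuity of the descended maps, the usual compactness manipulations — are routine. The two points that genuinely need attention are the closedness of~$\HH$ in the existence argument, which is why I pass through the product set~$GK$ rather than arguing abstractly about images of Lie group homomorphisms, and, in the uniqueness argument, the fact that a closed connected subgroup of~$\GG$ is again a $1$-connected nilpotent Lie group; this is what licenses the use of \cref{MalcevSuperrig} with the~$\HH_i$ playing the role of the ambient group, and it is essentially the only ingredient not already recorded in the excerpt.
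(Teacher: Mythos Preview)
The paper does not supply its own proof of this proposition: it is stated with a citation to \cite[Prop.~2.5, p.~31]{RaghunathanBook} and left at that. So there is no ``paper's proof'' to compare against.

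Your argument is correct. The existence half is a clean reduction to \cref{GisLatt} and \cref{MalcevSuperrig}: realise $G$ abstractly as a cocompact lattice in some $1$-connected nilpotent~$\GG_0$, push $\GG_0$ into~$\GG$ via the Malcev extension of the inclusion, and take the image. Your care with closedness (writing $\HH = GK$ with $K$ compact and $G$ closed, rather than appealing to general facts about images of Lie homomorphisms) is exactly the right move. The uniqueness half is likewise a standard rigidity trick: extend the identity on~$G$ to a map $\HH_1 \to \HH_2$, and use the uniqueness clause of \cref{MalcevSuperrig} (with target~$\GG$) to see that this extension must be the inclusion. The one external fact you invoke --- that a closed, connected subgroup of a $1$-connected nilpotent Lie group is itself $1$-connected --- is indeed standard (it follows, for instance, from the fact that $\exp$ is a global diffeomorphism for such groups, so any connected subgroup is diffeomorphic to its Lie algebra), and you are right to flag it as the only ingredient not already recorded in the excerpt.
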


\subsection{Description of left-invariant orders on nilpotent groups}

\begin{prop} \label{order->homo}
Assume
	\begin{itemize}
	\item $\GG$ is a 1-connected, nilpotent Lie group,
	\item $G$ is a nontrivial, discrete, cocompact subgroup of\/~$\GG$,
	and
	\item $\prec$ is a left-invariant order on\/~$G$.
	\end{itemize}
Then there is a nontrivial, continuous homomorphism $\varphi \colon \GG \to \real$, such that, for all $x,y \in G$, we have
	$$ \varphi(x) < \varphi(y) \implies x \prec y .$$
Furthermore, $\varphi$ is unique up to multiplication by a positive scalar.
\end{prop}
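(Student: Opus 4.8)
The plan is to use the convex-subgroup structure of $\prec$ together with \cref{Nilp->Conradian} and \cref{MalcevSuperrig}. First I would consider the convex jump $C_2/C_1$ determined by the largest proper convex subgroup of $G$; that is, let $C_1$ be the (unique) maximal proper convex subgroup and $C_2 = G$. By \cref{SetOfConvexSubgrps} this is well defined since $G$ is nontrivial, and by \cref{Nilp->Conradian} the jump $G/C_1$ is Archimedean, so there is a nontrivial homomorphism $\psi \colon G \to \real$ with $\ker \psi = C_1$ such that $\psi(x) < \psi(y) \implies x \prec y$ whenever $x,y$ have distinct images in $G/C_1$. The point of picking the top jump is that every $g \notin C_1$ is $\prec$-larger than everything in $C_1$, so the implication $\psi(x) < \psi(y) \implies x \prec y$ in fact holds for all $x,y \in G$, not just those in distinct cosets of $C_1$: if $\psi(x) = \psi(y)$ there is nothing to prove, and if $\psi(x) \neq \psi(y)$ then $x^{-1}y \notin C_1$ and convexity forces the conclusion. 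So $\psi$ already has the desired order-compatibility property at the level of the discrete group.

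Next I would promote $\psi$ to a continuous homomorphism on $\GG$. Since $\GG$ is $1$-connected nilpotent and $\real$ is a $1$-connected nilpotent Lie group, \cref{MalcevSuperrig} applies: $\psi \colon G \to \real$ extends uniquely to a continuous homomorphism $\varphi \colon \GG \to \real$. This $\varphi$ is nontrivial because its restriction $\psi$ to the cocompact (hence Zariski-dense, and in particular nontrivial) subgroup $G$ is nontrivial; concretely, if $\varphi$ vanished on $G$ it would vanish on $\GG$ by uniqueness of the extension of the zero homomorphism. Restricting back, $\varphi|_G = \psi$, so the inequality $\varphi(x) < \varphi(y) \implies x \prec y$ for $x,y \in G$ is exactly what we established in the previous paragraph.

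For uniqueness up to a positive scalar, suppose $\varphi'$ is another continuous homomorphism $\GG \to \real$ with the same property. A continuous homomorphism $\GG \to \real$ kills $[\GG,\GG]$, hence factors through the abelianization $\GG/\overline{[\GG,\GG]}$, which is a finite-dimensional $\real$-vector space $V$; and $\varphi, \varphi'$ are determined by linear functionals on $V$. The image of $G$ spans $V$ (cocompactness), so it suffices to compare $\psi = \varphi|_G$ and $\psi' = \varphi'|_G$. Both are nontrivial homomorphisms $G \to \real$ compatible with $\prec$ in the strong sense above. I would argue that any such homomorphism must have kernel exactly $C_1$: its kernel is a convex subgroup (the preimage of $0$ under an order-compatible homomorphism into an Archimedean group is convex), it is proper since the homomorphism is nontrivial, and it is maximal among proper convex subgroups because the quotient embeds in $\real$, which has no proper convex subgroups — so by \cref{SetOfConvexSubgrps} the kernel equals $C_1$. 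Thus $\psi$ and $\psi'$ induce two injective order-preserving homomorphisms $G/C_1 \hookrightarrow \real$; a standard Archimedean-group argument (two order-embeddings of an Archimedean ordered group into $\real$ differ by a positive scalar) gives $\psi' = \lambda \psi$ for some $\lambda > 0$, and then $\varphi' = \lambda \varphi$ on $\GG$ by uniqueness in \cref{MalcevSuperrig}.

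The main obstacle I anticipate is the uniqueness half, specifically the clean identification of the kernel of an arbitrary order-compatible homomorphism with the canonical top convex subgroup $C_1$, and the Archimedean rigidity step showing two order-embeddings into $\real$ agree up to scaling; the existence half is essentially a repackaging of \cref{Nilp->Conradian} plus Mal'cev rigidity and should go through smoothly.
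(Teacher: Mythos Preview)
Your approach is essentially the same as the paper's: take the top convex jump, use \cref{Nilp->Conradian} to get an Archimedean homomorphism $G \to \real$, extend via \cref{MalcevSuperrig}, and deduce uniqueness from Archimedean rigidity (the paper simply cites \cite[Prop.~2.2.1, p.~34]{KopytovMedvedev-ROGrps} for this last step, where you spell it out). One small point: the existence of a \emph{maximal} proper convex subgroup does not follow from \cref{SetOfConvexSubgrps} and nontriviality alone (the union of all proper convex subgroups could be all of~$G$); you need that $G$ is finitely generated, which the paper invokes explicitly via \cref{discrete->fg}.
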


\begin{proof}
Since $G$ is finitely generated \csee{discrete->fg}, it is easy to see that $G$ has a maximal convex subgroup~$C$ (cf.\ \cref{SetOfConvexSubgrps}), so $G/C$ is a convex jump. Also, since $G$ is nilpotent, we know that every convex jump is Archimedean \csee{Nilp->Conradian}. Therefore, there is a nontrivial homomorphism $\varphi_0 \colon G \to \real$, such that $\varphi_0(x) < \varphi_0(y) \Rightarrow x \prec y$. (Furthermore, this homomorphism is unique up to multiplication by a positive scalar \cite[Prop.~2.2.1, p.~34]{KopytovMedvedev-ROGrps}.) From \cref{MalcevSuperrig}, we know that $\varphi_0$ extends (uniquely) to a continuous homomorphism $\varphi \colon \GG \to \real$.
\end{proof}

The results in previous sections were originally obtained by using the following structural description of each left-invariant order on any finitely generated, nilpotent group.

\begin{cor} \label{LieDescripLONilp}
Assume 
	\begin{itemize}
	\item $\GG$ is a 1-connected, nilpotent Lie group,
	\item $G$ is a discrete, cocompact subgroup of\/~$\GG$,
	and 
	\item $\prec$ is a left-invariant order on~$G$.
	\end{itemize}
Then there exist:
	\begin{itemize}
	\item a subnormal series\/ $\GG = \CC_r \lamron \CC_{r-1} \lamron \cdots \lamron \CC_1  \lamron \CC_0 = \{e\}$ of closed, connected subgroups of\/~$\GG$,
	and
	\item for each $i \in \{1,\ldots,r\}$, a nontrivial, continuous homomorphism $\varphi_i \colon \CC_i \to \real$,
	\end{itemize}
such that, for $1 \le i \le r$:
	\begin{enumerate}
	\item for all $x,y \in G \cap \CC_i$, we have $\varphi_i(x) < \varphi_i(y) \implies x \prec y$,
	\item \label{LieDescripLONilp-cocpct}
	$G \cap \CC_i$ is a cocompact subgroup of\/~$\CC_i$,
	and
	\item \label{LieDescripLONilp-intersect}
	$G \cap \ker \varphi_i = G \cap \CC_{i-1}$.
	\end{enumerate}
Furthermore, the subgroups\/ $\CC_1,\ldots,\CC_r$ are unique, and each homomorphism $\varphi_i$ is unique up to multiplication by a positive scalar.
\end{cor}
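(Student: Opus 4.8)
The plan is to realize $\prec$ as a ``lexicographic'' order along the flag obtained from the convex subgroups of $(G,\prec)$ by passing to syndetic hulls. Since $G$ is finitely generated \csee{discrete->fg}, its Hirsch rank is finite, and every convex jump is Archimedean \csee{Nilp->Conradian}, hence a nontrivial subgroup of $\real$, so of rank at least one; therefore any chain of convex subgroups has length at most $\rank G$, and the (totally ordered) family of convex subgroups \csee{SetOfConvexSubgrps} is finite. Write it as $\{e\} = C_0 \subsetneq C_1 \subsetneq \cdots \subsetneq C_r = G$. For each $i$, Archimedeanity of the jump $C_i/C_{i-1}$ gives a nontrivial homomorphism $\psi_i \colon C_i \to \real$ with $\ker \psi_i = C_{i-1}$ such that $\psi_i(x) < \psi_i(y) \Rightarrow x \prec y$, and $\psi_i$ is unique up to multiplication by a positive scalar \cite[Prop.~2.2.1, p.~34]{KopytovMedvedev-ROGrps}.

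Next I would let $\CC_i$ be the syndetic hull of $C_i$ in $\GG$, so that $C_i$ is cocompact in $\CC_i$, $\CC_0 = \{e\}$, and $\CC_r = \GG$. The crucial point is that $G \cap \CC_i = C_i$: indeed $G \cap \CC_i$ is a discrete, cocompact --- hence lattice --- subgroup of $\CC_i$ containing the lattice $C_i$, so $[\,G \cap \CC_i : C_i\,]$ is finite (a lattice in a $1$-connected nilpotent Lie group has finite index in any lattice containing it); thus every element of $G \cap \CC_i$ has a power in $C_i$, and since a convex subgroup $C$ of any left-ordered group satisfies $g^m \in C \Rightarrow g \in C$ (a positive element $g \notin C$ exceeds every positive element of $C$, hence so does $g^m$), we get $G \cap \CC_i = C_i$. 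This already gives conclusion~\pref{LieDescripLONilp-cocpct}. Extending $\psi_i$ to a continuous homomorphism $\varphi_i \colon \CC_i \to \real$ by Malcev rigidity \csee{MalcevSuperrig} (legitimate since $C_i$ is cocompact in $\CC_i$), conclusion~(1) holds because $\varphi_i$ agrees with $\psi_i$ on $G \cap \CC_i = C_i$, and conclusion~\pref{LieDescripLONilp-intersect} holds because $G \cap \ker \varphi_i \subseteq G \cap \CC_i = C_i$, whence $G \cap \ker \varphi_i = C_i \cap \ker \psi_i = C_{i-1} = G \cap \CC_{i-1}$.

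The step I expect to need the most care is verifying $\CC_{i-1} \normal \CC_i$ --- note this does not follow merely from $\ker\varphi_i$ being normal, since the jump $C_i/C_{i-1}$ may have rank greater than one, so $\ker\varphi_i$ can strictly contain $\CC_{i-1}$. I would instead derive it from $C_{i-1} = \ker \psi_i \normal C_i$ together with Zariski density of lattices in nilpotent Lie groups: conjugation by any $c \in C_i$ fixes $C_{i-1}$, hence fixes $\CC_{i-1}$ by uniqueness of syndetic hulls, so the image of $C_i$ under $\mathrm{Ad}_{\CC_i}$ preserves the subalgebra $\mathrm{Lie}(\CC_{i-1})$ of $\mathrm{Lie}(\CC_i)$; since $C_i$ is Zariski dense in $\CC_i$ and the stabilizer of a subspace is Zariski closed, $\mathrm{Ad}_{\CC_i}(\CC_i)$ also preserves $\mathrm{Lie}(\CC_{i-1})$, i.e.\ $\CC_i$ normalizes $\CC_{i-1}$. (Equivalently, this is functoriality of the Malcev completion: the normal inclusion $C_{i-1} \normal C_i$ passes to $\CC_{i-1} \normal \CC_i$.) This yields the subnormal series and completes existence.

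For uniqueness, suppose closed connected subgroups $\CC_i'$ and homomorphisms $\varphi_i'$ also satisfy the conclusions. First, each $G \cap \CC_i'$ is convex: if $a \prec g \prec b$ with $a,b \in G \cap \CC_i'$, let $j$ be least with $g \in \CC_j'$ (the case $g=e$ being trivial); if $j > i$ then $b \in G \cap \CC_{j-1}' = G \cap \ker \varphi_j'$ by~\pref{LieDescripLONilp-intersect}, so $\varphi_j'(b) = 0$, while $\varphi_j'(g) \neq 0$, and after replacing $g$ by $g^{-1}$ if necessary we may take $\varphi_j'(g) > 0$, so~(1) forces $b \prec g$, a contradiction; hence $j \le i$ and $g \in G \cap \CC_i'$. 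Second, $G \cap \CC_{i-1}' = G \cap \ker\varphi_i'$ is a \emph{proper} subgroup of $G \cap \CC_i'$, since otherwise the cocompact subgroup $G \cap \CC_i'$ of $\CC_i'$ would lie in the proper closed subgroup $\ker\varphi_i'$, forcing $\CC_i'/\ker\varphi_i' \cong \varphi_i'(\CC_i') \cong \real$ to be compact. Therefore $\{G \cap \CC_i'\}_{i=0}^{r}$ is a strictly increasing chain of $r+1$ convex subgroups from $\{e\}$ to $G$, so it is the full chain: $G \cap \CC_i' = C_i$. Consequently $\CC_i'$ is a closed connected subgroup in which $C_i$ is cocompact, so $\CC_i' = \CC_i$ by uniqueness of syndetic hulls; and $\varphi_i'|_{C_i}$ is an order-compatible homomorphism on the jump $C_i/C_{i-1}$ with kernel $C_{i-1}$, hence a positive multiple of $\psi_i$, hence $\varphi_i'$ is the same positive multiple of $\varphi_i$ by the uniqueness clause of Malcev rigidity.
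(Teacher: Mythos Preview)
Your proof is correct and uses the same underlying ingredients as the paper---convex subgroups, Archimedean jumps, syndetic hulls, and Malcev rigidity---but organized differently. The paper proceeds by induction on $\dim\GG$: it applies \cref{order->homo} once to obtain $\varphi_r$, takes $\CC_{r-1}$ to be the syndetic hull of $G\cap\ker\varphi_r$, and then invokes the inductive hypothesis on $\CC_{r-1}$. You instead write down the entire chain of convex subgroups $C_0\subsetneq\cdots\subsetneq C_r$ at once and pass each to its syndetic hull simultaneously. The two constructions produce the same $\CC_i$ and $\varphi_i$; yours simply unwinds the recursion.

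Where your argument goes beyond the paper's is in the care you take with the subnormality step $\CC_{i-1}\normal\CC_i$ (via Zariski density, or equivalently functoriality of the Malcev completion) and in the uniqueness clause, neither of which the paper's short inductive proof addresses explicitly. Your convexity argument for $G\cap\CC_i'$ and the strict-increase argument showing the $G\cap\CC_i'$ must coincide with the $C_i$ are clean and not supplied in the paper. One small cosmetic point: in the convexity verification, rather than ``replacing $g$ by $g^{-1}$,'' it is cleaner to argue directly that $\varphi_j'(g)>0$ forces $b\prec g$ while $\varphi_j'(g)<0$ forces $g\prec a$, both contradictions; but your version is fine once one also swaps $a$ and $b$ for their inverses.
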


\begin{proof}
Let $\varphi \colon \GG \to \real$ be the homomorphism provided by \cref{order->homo}, and let $\CC$ be the syndetic hull of $G \cap \ker \varphi$. By induction on $\dim \GG$, we can apply the \namecref{LieDescripLONilp} to~$\CC$, obtaining
	\begin{itemize}
	\item a chain $\CC = \CC_{r-1} \lamron \CC_{r-2} \lamron \cdots \lamron \CC_1  \lamron \CC_0 = \{e\}$ of closed, connected subgroups of~$\CC$,
	and
	\item for each $i \in \{1,\ldots,r-1\}$, a nontrivial, continuous homomorphism $\varphi_i \colon \CC_i \to \real$.
	\end{itemize}
To complete the construction, let $\CC_r = \GG$ and $\varphi_r = \varphi$.
\end{proof}

\begin{rems} \label{LieDescripRems} \ 
\noprelistbreak
	\begin{enumerate}
	\item It is not difficult to show that each quotient $\CC_i/\CC_{i-1}$ is abelian.
	\item  \label{LieDescripRems-normal} 
	In the setting of \cref{LieDescripLONilp}, the order~$\prec$ is bi-invariant iff $\CC_i$ and~$\ker \varphi_i$ are normal subgroups of~$\GG$, for $1 \le i \le r$.
	\item The converse of \cref{LieDescripLONilp} is true: if subgroups $\CC_i$ and homomorphisms~$\varphi_i$ are provided that satisfy \pref{LieDescripLONilp-cocpct} and~\pref{LieDescripLONilp-intersect}, then the positive cone of a left-invariant order~$\prec$ can be defined by prescribing:
	$$ x \succ e \iff \varphi_i(x) > 0 ,$$
where $i$~is chosen so that $x \in \CC_i \smallsetminus \CC_{i-1}$.
	\end{enumerate}
\end{rems}


\begin{thebibliography}{9}

\bibitem{BludovGlassRhemtulla-JumpsCentral}
{\scshape Bludov, V.\,V.; Glass, A.\,M.\,W.;  Rhemtulla, A.\,H.}
Ordered groups in which all convex jumps are central,
\emph{J. Korean Math. Soc.} \textbf{40} (2003), no.~2, 225--239.
MR1958028 (2003j:06020), 
Zbl 1031.20036.

\bibitem{Hall-ThyGrps}
{\scshape Hall, M.,\,Jr.}
The theory of groups. 
\emph{Macmillan, New York}, 1959.
MR0103215 (21 \#1996), 
Zbl 0116.25403.

\bibitem{Koberda-Faithful}
{\scshape Koberda, T.}
Faithful actions of automorphisms on the space of orderings of a group,
\emph{New York J. Math.} \textbf{17} (2011) 783--798.

\bibitem{KopytovMedvedev-ROGrps}
{\scshape Kopytov, V.\,M.; Medvedev, N.\,Ya.}
Right-ordered groups.
\emph{Plenum, New York}, 1996.
MR1393199 (97h:06024a),
Zbl 0852.06005,
ISBN~0-306-11060-1.

\bibitem{LennoxRobinson-ThyInfSolvGrps}
{\scshape Lennox, J.\,C.; Robinson, D.\,J.\,S.}
The theory of infinite soluble groups.
\emph{Oxford University Press, Oxford}, 2004. 
MR2093872 (2006b:20047),
Zbl 1059.20001,
ISBN~0-19-850728-3.

\bibitem{Navas-DynamicsLO}
{\scshape Navas, A.}
On the dynamics of (left) orderable groups,
\emph{Ann. Inst. Fourier (Grenoble)} \textbf{60} (2010), no.~5, 1685--1740. 
MR2766228 (2012e:20088),
Zbl pre05822118.

\bibitem{RaghunathanBook}
{\scshape Raghunathan, M.\,S.}
Discrete subgroups of Lie groups. 
\emph{Springer, New York}, 1972.
MR0507234 (58 \#22394a),
Zbl 0254.22005,
ISBN~0-387-05749-8.

\bibitem{Sikora-TopOrders}
{\scshape Sikora, A.\,S.}
Topology on the spaces of orderings of groups,
\emph{Bull. London Math. Soc.} \textbf{36} (2004), no.~4, 519--526. 
MR2069015 (2005b:06031),
Zbl 1057.06006.

\bibitem{Witte-SyndHulls}
{\scshape Witte, D.}
Superrigid subgroups and syndetic hulls in solvable Lie groups.
\emph{Rigidity in dynamics and geometry} (Cambridge, 2000), 441--457. 
\emph{Springer, Berlin}, 2002.
MR1919416 (2003g:22005),
Zbl 1001.22008,
ISBN~3-540-43243-4.

\end{thebibliography}
\end{document}